\let\l=\lambda
\newcommand{\bbZ}{{\ensuremath{\mathbb Z}} }
\def\ceil#1{\lceil #1 \rceil}
\newcommand{\bbT}{{\ensuremath{\mathbb T}} }
\def\Tree{{\bbT}}
\newcommand{\bbN}{{\ensuremath{\mathbb N}} }
\newcommand{\cF}{\ensuremath{\mathcal F}}
\let\O=\Omega
\newcommand{\bbR}{{\ensuremath{\mathbb R}} }
\newcommand{\bbP}{{\ensuremath{\mathbb P}} }
\let\t=\tau
\let\s=\sigma
\let\h=\eta
\let\a=\alpha
\let\b=\beta
\let\g=\gamma
\def\thsp{\thinspace}
\def\tc{\thsp | \thsp}
\def\ie{\hbox{\it i.e.,\,}}
\def\bigno{\bigskip\noindent}
\let\d=\delta
\newcommand{\eps}{\epsilon}
\def\medno{\medskip\noindent}
\let\neper=e
\def\nep#1{ \neper^{#1}}
\newcommand{\lm}{\lambda_-}
\newcommand{\lp}{\lambda_+}
\def\inte#1{\lfloor #1 \rfloor}
\let\G=\Gamma
\newtheorem{theorem}{Theorem}[section]
\newtheorem{lemma}[theorem]{Lemma}
\newtheorem{proposition}[theorem]{Proposition}
\newtheorem{corollary}[theorem]{Corollary}
\newtheorem{definition}[theorem]{Definition}
\begin{document}

\title{The multi-state hard core model on a regular tree}

\date{\today}

\author[]{David Galvin} \address{David Galvin, Department of Mathematics, University of Notre Dame, South Bend IN 46556}
\email{dgalvin1@nd.edu}\thanks{}
 \author[]{Fabio Martinelli}
\address{Fabio Martinelli, Dip. Matematica, Universita' di Roma Tre, L.go S. Murialdo 1,
00146 Roma, Italy} \email{martin@mat.uniroma3.it}\thanks{}
 \author[]{Kavita Ramanan} \address{Kavita Ramanan, Division of Applied Mathematics, Brown University, Providence RI 02912}
 \email{Kavita\_Ramanan@brown.edu}\thanks{}
 \author[]{Prasad Tetali} \address{Prasad Tetali, School of Mathematics and School of Computer Science, Georgia  Institute of Technology, Atlanta GA 30332}
 \email{tetali@math.gatech.edu}\thanks{
 This work is supported in part by NSF Grants
  DMS-0401239 and DMS-0701043 (PT), CMMI-0928154 and DMI-0728064 (KR), and DMS-0111298 (DG), by the NSA (DG), and by the European Research Council ``Advanced Grant''  PTRELSS 228032 (FM)}

\begin{abstract}
The classical hard core model from statistical physics, with activity $\lambda > 0$ and  capacity $C=1$, on a graph $G$, concerns a probability  measure on the set ${\mathcal I}(G)$ of independent sets  of $G$, with the measure of each independent set $I\in {\mathcal I}(G)$ being proportional to $\lambda^{|I|}$.
Ramanan et al.\ proposed a generalization of the hard core model as an idealized model of multicasting in communication networks. In this generalization, the {\em multi-state} hard core model, the capacity $C$ is allowed to be a positive integer, and a configuration in the model is an assignment of states from $\{0,\ldots,C\}$ to $V(G)$ (the set of nodes of $G$) subject to the constraint that the states of adjacent nodes may not sum to more than $C$. The activity associated to state $i$ is $\lambda^{i}$, so that the probability of a configuration $\sigma:V(G)\rightarrow \{0,\ldots, C\}$ is proportional to $\lambda^{\sum_{v \in V(G)} \sigma(v)}$.

In this work, we consider this generalization when $G$ is an infinite rooted $b$-ary tree and prove rigorously some of the conjectures made by Ramanan et al. In particular, we show that the $C=2$ model exhibits a (first-order) phase transition at a larger value of $\lambda$ than the $C=1$ model exhibits its (second-order) phase transition.
In addition, for large $b$ we identify a short interval of values for $\l$ above which the model exhibits phase
co-existence and below which there is phase uniqueness. For odd $C$,  this transition occurs in the region of $\l= (e/b)^{1/\ceil{C/2}}$,
while for  even $C$, it occurs around $\l=(\log b/b(C+2))^{2/(C+2)}$. In the latter case, the transition is first-order.
\end{abstract}

\maketitle

\noindent
{\bf Keywords}.
Gibbs measures, hard core model, multicasting, phase transition, loss networks

\medskip

\noindent
{\bf AMS Classification}.
Primary: 82B20, 82B26, 60K35; secondary: 90B15, 05C99.

\pagestyle{myheadings}
\thispagestyle{plain}
\markboth{D. GALVIN, F. MARTINELLI, K. RAMANAN AND P. TETALI}{THE MULTI-STATE HARD CORE MODEL ON A REGULAR TREE}

\section{Introduction}

\subsection{The Multi-State Hard Core Model}

Let $G = (V,E)$ be a finite or countably infinite  graph without loops,
and let $S$ be a finite set. We refer to the elements of $S$
as {\em states}.  Many stochastic processes on $S^{V}$ that
arise in applications are subject to ``hard constraints'' that
prohibit certain values of $S$ from being adjacent to one another
in the graph $G$.  Such processes only attain configurations that
lie in a certain feasible subset of $S^{V}$.
 A generic example is the {\em hard core model},
which has state space $S = \{0,1\}$ and imposes the constraint
that no two adjacent vertices in the graph
 can both have the state $1$.
In other words, the set of feasible configurations for the
hard core model on a graph $G$ is
$\{\sigma \in \{0,1\}^V: \sigma_x + \sigma_y \leq 1~\mbox{for every}~xy \in E \}$, or, equivalently,
the collection of independent sets of the graph $G$.
Processes with such \emph{hard constraints} arise in fields as
diverse as combinatorics, statistical mechanics and
telecommunications.
In particular, the hard core
model arises in the study of random independent sets of a graph
\cite{briwin02,galkah}, the study of gas molecules on a lattice
\cite{bax}, and in the analysis of multicasting in telecommunication networks
\cite{kel,lou,ramzie02}.

In this work, we consider a generalization of the hard core model,
which we refer to as the {\em multi-state} hard core model,
in which the state space is
$$
S_C = \{0, 1, 2, \ldots, C\}\,,
$$
for some integer $C \geq 1$, and
the set of allowable configurations is given by
$$
  \Omega_{G} = \{ \sigma \in S_C^V: \sigma_x + \sigma_y \leq C ~\mbox{for every}~
xy \in E\}.
$$
When $G$ is the $d$-dimensional lattice $\bbZ^d$,
this model was introduced and studied by Mazel and Suhov in \cite{mazsuh},
motivated by applications in statistical physics.
In our work, we  focus on the case where $G$ is
an infinite rooted $b$-ary tree (\ie an infinite graph without cycles in
which each vertex has exactly $b+1$ edges incident to it, except for one
distinguished vertex called the root which has $b$ edges incident to it),
which we denote by $\Tree^b$.

On the tree,
this model was studied by Ramanan et al.\ in  \cite{ramzie02} as an idealized
example of multicasting on a regular tree network, each of whose edges has
 the same capacity $C$.  In communications, multicasting arises when,
instead of having a simple end-to-end connection, a transmission is made
from a single site to a group of individuals  \cite{ballfrancrow93}.
An important performance measure of interest is the probability of
packet loss for a given routing protocol \cite{yanwan98}.
As in \cite{ramzie02}, here we consider an idealized  model in which
the routing is simple in the sense that  nodes
multicast only to their nearest neighbors, and study the impact
of the connectivity of the network (\ie the value of $b$) and
the arrival rate on the blocking (or packet loss) probabilities.
The state $\sigma_v$ of any node or vertex $v \in V$ represents the number
of active multicast calls present at that node.
Multicast calls are assumed to arrive at each node as a Poisson process
with rate $\lambda$ and require one unit of capacity on  each of the
$b+1$ edges emanating from that node.  If this capacity is available,
then the call is accepted and the number of active multicast calls at that
node increases by one, while if the required capacity is not
available, then the state of the node remains unchanged and the call
is said to be blocked or lost.  Calls that are accepted
require a random amount of service and then depart the system.
Service requirements of  calls are assumed to be independent and
identically distributed (without loss of generality with mean $1$), and
independent of the arrival process.
 This model is a special case
of a loss network (see \cite{kel} for a general survey of loss networks
and \cite{ lueramzie06,ramzie02} for connections with this particular model).

For a finite graph $G$ and arrival rate $\lambda$,
it is well-known
that the associated stochastic process has  a unique
stationary distribution $\mu_{G,\lambda}$ on $\Omega_{G}$ that is
given explicitly by
\begin{equation}
\label{formula-statdist}
  \mu_{G, \lambda} \doteq \dfrac{1}{Z_{G, \lambda}} \prod_{v \in V} \lambda_{\sigma_v}
\qquad
\mbox{ for } \sigma \in \Omega_{G},
\end{equation}
where $Z_{G, \lambda}$ is the corresponding normalizing constant (partition function)
$Z_{G, \lambda} \doteq \sum_{\sigma \in \Omega_{G}}
\prod_{v \in V} \lambda_{\sigma_v}$, where the form of $\lambda_{i}$ depends on how the multicast calls are served. If the calls are assumed to be served in a first-come first-served
manner at each node (see \cite{kel}), then we have
\[ \lambda_{i} \doteq \dfrac{\lambda^i}{i !}, \qquad i = 0, \ldots, C,  \]
If they are served using the processor sharing scheduling
discipline at each node (see \cite{kleibook2}), then we have
\begin{equation}
\label{def-lambda2}
\lambda_{i} \doteq \lambda^i, \qquad i = 0, \ldots, C.
\end{equation}
Here we adopt the usual
convention that
$0! = 1$, so that $\lambda_0 = 1$ in both models, and we will
sometimes refer to the arrival rate $\lambda$ as the {\em activity}. In this paper (as in \cite{mazsuh}) our $\lambda_i$'s  will always be as defined in (\ref{def-lambda2}).
Thus, our exclusive focus will be the study of the multi-state hard core
model on a $b$-ary tree $\Tree^b$ with activities given by (\ref{def-lambda2}).

\subsection{Gibbs Measures and Phase Transitions}

Although there is an explicit expression \eqref{formula-statdist}
for the stationary distribution on a finite graph, the computational
complexity of calculating the normalization constant for large graphs
limits the applicability of this formula.  Thus, in order to gain insight
into the behavior of these measures on large graphs,
it is often useful to consider the associated Gibbs measure on an infinite
graph.  Roughly speaking, a Gibbs measure on an
infinite graph $G$ associated with an activity
$\lambda$ is characterized by the property that the
distribution of the configuration on any {\em finite} subset $U$ of $V$,
conditioned on the complement, is equal to the
regular conditional probability of the measure
$\mu_{G[\bar{U}], \lambda}$ on the restriction $G[\bar{U}]$ of the graph
$G$ to the closure $\bar{U} = U \cup \partial U$ of $U$,
given the configuration on the
boundary $\partial U$  of $U$ (see Definition \ref{def-gibbs} below
for a more precise formulation).  It is not hard to show that
such a Gibbs measure always exists (in a far more general
context, see for example \cite{Georgiibook}).

However,  unlike stationary distributions on
finite graphs, the associated Gibbs measures  on infinite graphs may
not be unique.  If there are multiple Gibbs measures associated with
a given arrival rate or activity $\lambda$, we say that there is
\emph{phase coexistence} at that $\lambda$.
Let $T_n$ denote the finite sub-tree of $\Tree^b$ with root $r$ and depth $n$,
which contains all vertices in $\Tree^b$ that are at a distance of at most $n$ from the root $r$.
As is well known (see, for example, Chapter 4 of \cite{Georgiibook}), for a fixed
activity $\lambda > 0$,
one way to obtain a Gibbs measure on the tree $\Tree^b$ rooted at $r$
is as the suitable limit of a sequence of measures,
where the $n$th
measure in the sequence is the stationary measure $\mu_{T_n \cup \partial T_n,
\lambda}$ on $T_n \cup \partial T_n$ (as defined in \eqref{formula-statdist}),
conditioned on the boundary $\partial T_n$
being empty (\ie conditioned on all vertices in the boundary having state $0$).
We shall refer to this Gibbs measure as the empty boundary condition (b.c.) Gibbs measure
(corresponding to the activity $\lambda$).
In a similar fashion,  we define the full b.c.\ Gibbs measure
to be the limit of a sequence of conditioned measures on $T_n$,
but now conditioned on  the boundary $\partial T_n$ being full (\ie conditioned on all vertices
in the boundary having
state $C$).
Let $\delta_{\lambda}$ denote the total variation distance of
the marginal distributions at the root $r$ under the empty b.c.\ and
full b.c.\ Gibbs measures corresponding to the activity  $\lambda$.
When $\lambda$ lies in the
region of uniqueness, clearly the empty b.c.\ Gibbs measure
coincides with the full b.c.\ Gibbs measure, and so $\delta_{\lambda} = 0$.
On the other hand, when $\lambda$ is in a region of
phase coexistence, then $\delta_{\lambda} > 0$
and it can be shown (due to a certain
monotonicity property of our model established in Lemma \ref{lem-mon} and
Proposition \ref{criterium}) that the empty b.c.\ and full b.c.\ Gibbs
measures must necessarily differ.
 If there exists $\lambda_{cr} = \lambda_{cr}(C)$
for which there is uniqueness for each $\lambda < \lambda_{cr}$ and
phase coexistence for every $\lambda > \lambda_{cr}$, then we say
that a \emph{phase transition} occurs at $\lambda_{cr}$.    Moreover, if
 $\delta_{\lambda}$, as a function of $\lambda$, is continuous
 at $\lambda_{cr}$, then we say that a {\em second-order}  phase
transition occurs, while if $\delta_{\lambda}$ is discontinuous at $\lambda_{cr}$, then we say that a {\em first-order} phase transition occurs.

When $C = 1$, the phase transition point $\lambda_{cr}(1)$ on the tree is
explicitly computable and is easily seen to be a second-order phase transition
(see \cite{kel,Zachary83,Zachary85} and also Section \ref{subs-rec}).
The behavior is more complicated for higher $C$.
The multi-state  hard core model on
the $d$-dimensional lattice $\bbZ^d$ was studied
in \cite{mazsuh}, where it was shown that
 when $C$ is odd, there is phase coexistence for all sufficiently large
 $\lambda$, while when $C$ is even,
there is a unique Gibbs measure for each sufficiently
large $\lambda$.
If phase coexistence were known to be monotone in the activity (this remains
an open problem on $\bbZ^d$ even when $d = 2$),
then
the  result of Mazel and Suhov would imply that there is no phase transition
on $\bbZ^d$ for even $C$.
On the other hand, numerical experiments for the multi-state
hard core model on the regular tree
 (see Section 3.5 and Figure 5 of \cite{ramzie02})
suggest that there is a  phase transition on the tree for every $C$,
but that the order of the phase transition depends on the parity of $C$ (being first-order for even $C$ and second-order for odd $C$).
This is particularly interesting as it shows that the parity of the capacity
 has an effect on the regular tree as well,
although the effect is not as pronounced as on the $d$-dimensional lattice.

The study of phase transitions of models with hard constraints on trees
has been the subject of much recent research (see \cite{briwin99,briwin02,marrozsuh04}).  In \cite{briwin99}, the focus
is on classifying types of hard constraints (as encoded in a so-called
constraint graph) on the basis of whether or not there exists
a unique simple invariant Gibbs measure for all activity vectors
$(\lambda_i, i \in S)$.
For $C  > 1$, the model that we present here allows for two $1$'s to be
adjacent, but
never allows a $1$ to be adjacent to $C$ which, in the language of
\cite{briwin99},  implies that  the associated constraint graph
is fertile. From Theorem 8.1 of \cite{briwin99} it
follows
 that there exist some activity vectors for which there exist
multiple simple invariant Gibbs measures.  However, the emphasis of our
work is quite different, as our aim is
to identify regions where multiple Gibbs measures (not necessarily
simple and invariant)  exist
for the particular choice of activity vector given in (\ref{def-lambda2}).
Another related work, again motivated by telecommunication networks,
is \cite{marrozsuh04}, which studies
Gibbs measures associated with
a three-state generalization of the hard core model.
However, the hard constraints considered in \cite{marrozsuh04}
are somewhat different from the $C = 2$ case in our model.

\subsection{Main Results and Outline}

The main contribution of this paper is to make rigorous some of
the conjectures made in \cite{ramzie02}, leading to a better understanding
of the multi-state hard core model. Our results may be broadly summarized as
follows.
\begin{enumerate}
\item For $C=2$ and every $b \in \bbN$, $b\geq 2$, we show that the Gibbs measure
is unique for
  larger values of $\l$ than in the usual $C=1$ hard core model (see Corollary
  \ref{2-1}) and we also show that the phase transition
is first-order (see Theorem \ref{C=2}).
Recall that, in contrast, for  $C=1$, the phase transition is second-order.
\item For large values of $b$, we identify
a rather narrow range of values for $\l$, above which there is phase
co-existence and below which there is uniqueness.  Although we do not establish the existence of a unique critical value $\l_{cr}(C)$ at which phase transition occurs, we establish a fairly
precise estimate of  $\lambda_{cr}(C)$ if (as we strongly believe) it exists:
when  $C$ is odd,
$$
\l_{cr}(C) \approx \left(\frac{e}{b}\right)^{\frac{1}{\ceil{C/2}}}
$$
(see Theorem \ref{critical}),  while for $C$ even,
$$
\l_{cr} (C) \approx
\left(\frac{1}{C+2}\frac{\log b}{b}\right)^{\frac{2}{C+2}}
$$
(see Theorem \ref{EVEN}).
\item For all even $C$ and all sufficiently large $b$ (depending on $C$), the model always
   exhibits a first-order phase transition (see Section \ref{EVEN2}).
\end{enumerate}

The outline of the paper is as follows.   First, in Section
\ref{sec-gibbs} we establish a connection between phase coexistence
and multiplicity of the fixed points of an associated recursion.
This is based on the construction of Gibbs measures as limits of conditional measures on finite trees with boundary conditions, as mentioned above.
In Section \ref{sec-recc2} we provide a detailed analysis of the recursion in the special
case $C=2$. In Section \ref{sec-largeb} we study the recursion when $b$ is large and identify the phase transition window. Finally, in Section
\ref{EVEN2} we study the asymptotics for large $b$ when $C$ is even and
provide evidence of a first-order phase transition.
An interesting open problem is to rigorously establish that the phase
transition is second-order for all odd $C$.

\section{Gibbs Measures and Recursions}
\label{sec-gibbs}

\subsection{Gibbs Measures on Trees}
\label{subs-gibbs}

Consider any graph $G = (V, E)$ with vertex set $V$ and
edge set $E \subseteq V^{(2)}$ (the set of unordered pairs from $V$).
For any $U \subset V$, the boundary of $U$ is
 $\partial U \doteq \{x \in V\setminus U:
xz \in E ~\mbox{for some}~ z \in U\}$ and the closure of
$U$ is $\overline{U} \doteq
U \cup \partial U$.  Let $G[U]$ denote the restriction of the graph
to the vertex set $U$.  For $\sigma \in S_C^V$, let $\sigma_U = (\sigma_v, v \in U)$
represent the projection of the configuration $\sigma$ onto the
vertex set $U$. With some abuse of notation, for conciseness, we will
write just $\sigma_v$ for $\sigma_{\{v\}}$ and refer to it as the \emph{state}
or, inspired by models in statistical mechanics, the \emph{spin value} at $v$.   For
$U \subseteq V$, let
$\cF (U)$ be the $\sigma$-field in $S_C^U$ generated by sets of
the form $\{\sigma_v = i\}$ for some $v \in U$
and $i \in S_C$.
We now provide a rigorous definition of the Gibbs measure.

\begin{definition}
\label{def-gibbs}
A Gibbs measure for the multi-state hard core model associated
with the activity $\lambda$
is a probability measure $\mu$ on $(S_C^V, \cF (V))$ that satisfies
for all $U \subset V$ and $\mu$-a.a. $\tau \in S_C^V$,
$$
\mu ( \sigma_U = \tau_U| \sigma_{V\setminus U} = \tau_{V\setminus U})
= \mu_{G[\bar{U}], \lambda} (\sigma_U = \tau_U| \sigma_{\partial U} =
\tau_{\partial U}),
$$
where $\mu_{G[\bar{U}], \lambda}$ is as defined in (\ref{formula-statdist}),
with $\lambda_i$ given as in (\ref{def-lambda2}).
\end{definition}

We now specialize to the case when $G$ is a regular, $b$-ary, rooted tree $\Tree^b$ with root $r$.
A {\it child} of a vertex $x$
in $\Tree^b$ is a neighboring vertex that is further from $r$ than $x$; the vertices (other than $x$) that lie along the path from $x$ to $r$ are the {\it ancestors} of $x$.
We will be concerned
with (complete) finite sub-trees $T$ of~$\Tree^b$ rooted at $r$; such a tree $T$ is determined by
a depth $n$, and consists of all those vertices at distance
at most $n$ from $r$.
It has $|T|=(b^{n+1}-1)/(b-1)$ vertices, and its
{\it boundary\/}~$\partial T$ consists of the children (in~$\Tree^b$)
of its leaves (so that $|\partial T|=b^{n+1}$). The tree consisting of all vertices
at distance at most $n$ from the root $r$ will be denoted by $T_n$.

Given a finite sub-tree $T$ and
$\tau\in \O_{\Tree^b}$, we let $\Omega_T^\tau$ denote
the (finite) set of  spin configurations
$\sigma\in \O_{T\cup\partial T}$ that agree with~$\tau$
on~$\partial T$; thus $\tau$ specifies a {\it boundary condition\/}
on~$T$.
For a function $f:\Omega_{T\cup \partial T}\to\bbR$ we denote by
$\mu_{T,\l}^\tau(f)= \sum_{\sigma\in\Omega_T^\t} \mu_{T,\l}^\tau(\sigma)f(\sigma)$
the {\it expectation\/} of~$f$ with respect to the
distribution~$\mu_{T,\l}^\tau(\s)\propto \prod_{v\in T}\l^{\s_v}$.
On the configuration space $\O_{\Tree^b}$, we define the partial order
$\s \prec \h$ if and only if $\s_v\le \h_v$ for all $v$ with even
 distance $d(v,r)$ from the root and $\s_v \ge \h_v$
 for all $v$ with odd distance from the root. Given two probability measures on
$\O_{\Tree^b}$, we then say that $\mu\prec \nu$ if $\mu(f)\le \nu(f)$ for
any (bounded) function $f$ that is non-decreasing with respect to the above
partial order.

Let $T$ be a
complete finite tree rooted at $r$, and let
$\mu_{T,\l}^0$ and $\mu_{T,\l}^C$, respectively,
 be the empty b.c.\ and full b.c.\
measures (corresponding to the two boundary conditions identically equal
to $0$ and $C$, respectively, on  $\partial T$).
The following monotonicity result is well known
(see, for example, Theorem 4.1 of \cite{Zachary85}).
However, for completeness, we provide an independent proof
of this result, which involves
a Markov chain argument that constructs a
simultaneous coupling of $(\mu_{T,\l}^0, \mu_{T,\l}^\tau,\mu_{T,\l}^C)$ such
that the required monotonicity
conditions are satisfied with probability one.

\begin{lemma}
\label{lem-mon}
For any $\tau \in \Omega_{\Tree^b}$,
$$
\begin{array}{rl}
\mu_{T,\l}^0\prec\mu_{T,\l}^\tau\prec \mu_{T,\l}^C &  \mbox{ if } d(\partial
T, r) \mbox{ is even, } \\
\mu_{T,\l}^C\prec\mu_{T,\l}^\tau\prec \mu_{T,\l}^0 &  \mbox{ if } d(\partial
T, r) \mbox{ is odd.}
\end{array}
$$
Moreover, if $d (\partial T, r)$ is even (respectively, odd)
there is a coupling $\pi_{T} = (\sigma^0, \sigma^\tau,
\sigma^C)$ of $(\mu_{T,\l}^0, \mu_{T,\l}^\tau,\mu_{T,\l}^C)$ such that
$\sigma^0 \prec \sigma^\tau \prec \sigma^C$ (respectively,
$\sigma^C \prec \sigma^\tau \prec \sigma^0$) with probability one.
\end{lemma}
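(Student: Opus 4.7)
The plan is to build the coupling $\pi_T = (\s^0, \s^\t, \s^C)$ via three simultaneous single-site heat-bath Glauber chains driven by common randomness; once $\pi_T$ is in hand, the stochastic orderings $\mu_{T,\l}^0 \prec \mu_{T,\l}^\t \prec \mu_{T,\l}^C$ (and its flip for odd $d(\partial T, r)$) follow by integrating bounded non-decreasing test functions against the almost-sure pointwise order.

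Concretely, I would run three chains on $\Omega_T^0$, $\Omega_T^\t$ and $\Omega_T^C$. At each step the same interior vertex $v \in T$ is selected in all three chains, and each chain resamples $\s_v$ from its own conditional distribution given the current spins at the remaining vertices of $T \cup \partial T$, using a common uniform $U \in [0,1]$ plugged into the inverse CDF. Because the hard constraint only couples $\s_v$ to its neighbors through $\s_v + \s_u \le C$, the conditional distribution is supported on $\{0, 1, \ldots, M\}$ with $M = C - \max_{u \sim v} \s_u$ and places mass proportional to $\l^i$ on each $i$ in that support. Each chain is irreducible and aperiodic on its feasible set and converges to the correct $\mu_{T,\l}^\cdot$.

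The crux is that the coupled update preserves $\prec$. If $\s^1 \prec \s^2 \prec \s^3$ and $v$ is selected, then since the neighbors of $v$ all have opposite parity to $v$, the relation at those neighbors forces the three values $M^i(v) = C - \max_{u \sim v} \s^i_u$ to be monotone in $i$, in the direction dictated by the parity of $v$. Within the family of laws proportional to $\l^i$ on $\{0, \ldots, M\}$, enlarging $M$ yields a stochastically larger measure: setting $A = \sum_{i<k} \l^i$, $B = \sum_{i=k}^{M} \l^i$, $c = \l^{M+1}$ gives
$$
\frac{B+c}{A+B+c} - \frac{B}{A+B} = \frac{cA}{(A+B+c)(A+B)} \ge 0.
$$
Hence common-$U$ CDF inversion orders the three new spins at $v$ in precisely the way $\prec$ demands --- directly when $v$ is at even distance from $r$, and with the automatic flip when $v$ is odd. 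Starting the three chains from any ordered triple of feasible initial configurations --- for instance the partial-order extrema consistent with the respective boundary conditions, which exist because assigning $0$ to one parity class and $C$ to the other satisfies the hard constraint on every edge --- and passing to a (subsequential) weak limit as $t\to\infty$ produces $\pi_T$ with the correct marginals and the almost-sure ordering.

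The principal obstacle is the parity bookkeeping in the monotonicity step: one must check carefully that the reversal of $\prec$ at odd-distance vertices dovetails with the reversal of the update monotonicity when $v$ itself is odd, and that the chosen extremal initial configurations really sit in the correct $\Omega_T^\cdot$ (which in turn depends on the parity of $d(\partial T, r)$). Once this is sorted out the rest is standard heat-bath coupling on a bipartite graph with a nested-support local conditional law.
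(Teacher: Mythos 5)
Your proposal is correct and is essentially the paper's own argument: the paper likewise runs a simultaneous single-site heat-bath chain on the triple of configuration spaces, observes that the ordering of neighbors forces the ordering of the maximal admissible values $(a,b,c)$ at the updated vertex, and uses a monotone coupling of the laws proportional to $\l^i$ on $\{0,\dots,a\}$, $\{0,\dots,b\}$, $\{0,\dots,c\}$ to preserve the partial order. The only cosmetic differences are that you realize the monotone single-site coupling explicitly via common-uniform inverse-CDF sampling (verifying the stochastic domination the paper merely calls ``clear'') and extract $\pi_T$ as a subsequential weak limit, whereas the paper takes the stationary distribution of the ergodic joint chain.
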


\begin{proof}
We consider only the
case when $d(\partial
T, r)$ is even, since the other case can be established in an
exactly analogous fashion.
On $\O_T^0\times\O_T^\tau\times\O_T^C$ we  construct  an
ergodic Markov chain $\{\s^0(t),\s^\tau(t),\s^C(t)\}_{t\in \bbZ_+}$ such
that at any time $t\in  \bbZ_+$ the required ordering relation
$\s^0(t) \prec \s^\tau(t) \prec \s^C (t)$ is satisfied,
and moreover each replica is itself an ergodic chain that is
reversible with respect to the measure $\mu_{T,\l}^{\cdot}$ with the
corresponding boundary condition. The stationary
distribution $\pi_T$ of the global chain will then represent the sought
coupling of the three measures.

The chain, a standard \emph{Heat Bath} sampler, is defined as
follows. Assume that the three current configurations corresponding to
$0,\tau$ and $C$ boundary conditions are equal to $(\a,\b,\g)$
respectively and that they satisfy the ordering relation. Pick uniformly at random
$v\in T$ and let $(a,b,c)$ be the maximum spin values in $v$ compatible
with the values of $(\a,\b,\g)$ on the neighbors of $v$
respectively. Due to the ordering assumption either $c\le b\le a$ or
the opposite inequalities hold. Then the current three values at
$v$ are replaced by new ones, $(\a'_v,\b'_v,\g'_v)$, sampled from a
coupling of the three distributions on $\{0,1,\ldots,a\},\ \{0,1,\ldots,b\},\
\{0,1,\ldots,c\}$ which assign a weight proportional to $\l^i$ to the value
$i$. It is clear that such a coupling can be constructed in such a way that
$(\a'_v,\b'_v,\g'_v)$ satisfy the opposite ordering of $(a,b,c)$ and
thus the global ordering is preserved.
\end{proof}

Consider now the sequence $\{T_{2n}\}_{n\in \bbN}$  with $d(\partial T_{2n},r)=2n$. Then, thanks to
monotonicity, $\lim_{n\to \infty}\mu^{C}_{T_{2n},\l}=\mu_\l^{C}$ exists
(weakly) and it defines the \emph{maximal} Gibbs measure. Similarly
$\lim_{n\to \infty}\mu^{0}_{T_{2n},\l}=\mu_\l^{0}$ defines the \emph{minimal}
Gibbs measure \cite{Georgiibook}. Notice that, by construction, $\lim_{n\to \infty}\mu_{T_{2n+1},\l}^C=\mu_\l^0$ while
$\lim_{n\to \infty}\mu_{T_{2n+1},\l}^0=\mu_\l^C$.  Finally, for any other Gibbs measure $\mu$, it holds that
$\mu_\l^0\prec \mu_\l\prec \mu_\l^C$.

The main problem is therefore that of deciding when $\mu_\l^C=\mu_\l^0$.
In what follows we establish the following criterion, which is in fact an
equivalent criterion, since the other implication is obviously true
(see \cite{Zachary83,Zachary85}; see also \cite{vanste,spi}
for a similar discussion in the special case of $C=1$).
Let $\bbP^\tau_{n,\l}$ be the
marginal of $\mu_{T_n,\l}^\tau$ on $\s_r$ given boundary condition
$\tau$, and let $\bbP^C_\l$ and $\bbP^0_\l$
be the corresponding marginals for $\mu^C_\l$ and $\mu^0_\l$, respectively.
\begin{proposition}
\label{criterium}
For every $\lambda > 0$,
if $\bbP^C_{\l}=\bbP^0_{\l}$ then $\mu^C_\l=\mu^0_\l$\,.
\end{proposition}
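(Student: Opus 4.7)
The plan is to combine the monotone coupling of Lemma \ref{lem-mon} with the recursive (self-similar) structure of $\Tree^b$ to promote equality of the root marginals to equality of the full measures.

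First I would pass Lemma \ref{lem-mon} to the infinite-volume limit. For each $n$ the lemma yields a coupling $\pi_{T_{2n}}$ of $(\mu^0_{T_{2n},\l},\mu^C_{T_{2n},\l})$ supported on pairs with $\s^0\prec \s^C$. Since $\Omega_{\Tree^b}$ is compact in the product topology and the event $\{\s^0\prec\s^C\}$ is closed, a tightness/subsequence argument produces a coupling $\pi=(\s^0,\s^C)$ of $(\mu^0_\l,\mu^C_\l)$ on $\Omega_{\Tree^b}$ with $\s^0\prec \s^C$ almost surely. Evaluating at the root $r$ (distance $0$, even) gives $\s^0_r\le \s^C_r$ a.s., and under the hypothesis $\bbP^C_\l=\bbP^0_\l$ the two sides have the same marginal law. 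Two ordered random variables with identical law must coincide, so $\s^0_r=\s^C_r$ with probability one.

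Next I would propagate the equality at $r$ outward by exploiting self-similarity. The subtree hanging off a child $v$ of $r$ is itself isomorphic to $\Tree^b$ rooted at $v$, and by the DLR equations the conditional distribution, under $\mu^C_\l$ (respectively $\mu^0_\l$), of the configuration on this subtree given $\s_r=k$ is the extremal Gibbs measure on a copy of $\Tree^b$ arising from the same recursion, subject only to the constraint that the new root $v$ takes a value in $\{0,\dots,C-k\}$. In the parametrization to be developed in Section \ref{subs-rec}, these conditional laws are encoded by ``up-messages'' lying at fixed points of a common one-step recursion on $\bbR^{C+1}$, and the marginal at the root is an explicit injective function of the up-message. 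Consequently $\bbP^C_\l=\bbP^0_\l$ forces the up-messages under $\mu^C_\l$ and $\mu^0_\l$ to coincide, so the two conditional laws at each child of $r$ agree.

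The Markov property on the tree now allows a routine induction on $d(v,r)$ to conclude that all finite-dimensional marginals of $\mu^0_\l$ and $\mu^C_\l$ coincide, hence so do the measures; equivalently, under the coupling $\pi$ one obtains $\s^0_v=\s^C_v$ almost surely at every vertex. The main obstacle is the second step: one must check that in the present multi-state setting the root marginal still recovers the up-message uniquely, which is where the specific activity weights $\l^i$ and the parity of $C$ enter. Once the recursion of Section \ref{subs-rec} is in place this is a direct calculation, but without it the monotone coupling alone does not close the argument.
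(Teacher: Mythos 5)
Your overall strategy is sound and, at its core, runs on the same mechanism as the paper's proof: the conditional law of a vertex given its parent's spin is the root marginal of a subtree measure conditioned on the capacity constraint, and the hypothesis $\bbP^C_\l=\bbP^0_\l$ removes the only source of disagreement. Your step 1 (a weak-limit monotone coupling of $(\mu^0_\l,\mu^C_\l)$ forcing $\s^0_r=\s^C_r$ a.s.) is correct, but note it does no propagation work by itself. The genuine gap is in step 2. You invoke ``the DLR equations'' to claim that, under $\mu^C_\l$, the conditional distribution on the \emph{infinite} subtree below a child of $r$, given $\s_r=k$, is an extremal Gibbs measure encoded by a fixed-point up-message. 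DLR constrains conditional distributions on \emph{finite} regions only; for a general Gibbs measure it is false that conditioning on the root spin yields an extremal measure on the subtree, and for the particular limits $\mu^0_\l,\mu^C_\l$ this identification is not automatic --- it is precisely the technical content that must be proved. The paper establishes it in finite volume: with $Z_k^\tau(i)$ the subtree partition function, one has $\mu^\tau_{T_{2n},\l}(\s_v=i\tc \s_w=j)=\bbP^\tau_{2n-n_v,\l}(i)/\bbP^\tau_{2n-n_v,\l}(\s_r\le C-j)$, and then passes to the limit, using the uniform lower bound $\bbP^0_{2n-n_v,\l}(\s_r\le C-j)\ge \bbP^0_{1,\l}(0)>0$ on the denominators. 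Your sketch also silently skips a parity bookkeeping point: the subtree below an odd-level vertex of $T_{2n}$ with full b.c.\ has odd depth, and $\lim_{n}\mu^C_{T_{2n+1},\l}=\mu^0_\l$, so the limiting up-message at odd levels under $\mu^C_\l$ is the empty-b.c.\ one; the even- and odd-level messages are identified only \emph{via} the hypothesis, and a correct writeup must track which marginal appears at each level.

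By contrast, the obstacle you flag as the main one --- injectivity of the map from up-message to root marginal --- is a non-issue: by definition $Q_n(i)=\bbP_n(i)/\bbP_n(0)$, so the message \emph{is} the normalized root marginal, and equality of marginals gives equality of messages for free. Once the finite-volume conditional identity, the denominator bound, and the parity alternation are in place, your downward induction through the levels becomes a legitimate reorganization of the paper's argument, which instead telescopes \emph{upward} from $v$ to the root and uses the monotone coupling of Lemma \ref{lem-mon} to bound the total variation distance on a finite box $T_m$ by finitely many one-point marginal differences. The two organizations are equivalent in substance, but the paper's finite-volume version never has to condition on infinite regions, which is exactly where your proposal, as written, is incomplete.
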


\begin{proof} Assume $\bbP^C_{\l}=\bbP^0_{\l}$. Then, by monotonicity,
\begin{equation} \label{tvd}
\lim_{n\to\infty}\|\bbP^C_{n,\l}-\bbP^0_{n,\l}\|_{TV}=0,
\end{equation}
where $\|\cdot\|_{TV}$ denotes total variation distance.
Let $A$ be a
  local event (\ie depending only on finitely many spins) and let $m$ be
  sufficiently  large so that $A$ does not depend on the spin configuration outside $T_{m}$.
Fix $n>m$,
  and let $\pi_{2n} = (\sigma^0, \sigma^\tau,
\sigma^C)$ be the monotone coupling
of $(\mu_{T_{2n},\l}^0, \mu_{T_{2n},\l}^\tau,\mu_{T_{2n},\l}^C)$
 described in Lemma \ref{lem-mon}. Then
\begin{eqnarray*}
  \label{eq:2}
  \|\mu_{T_{2n},\l}^C(A)-\mu_{T_{2n},\l}^0(A)\| & \le & \pi_{2n}(\s_v^C\neq \s^0_v\ \text{for some }v\in T_{m})\\
& \le & \sum_{v\in T_m \atop d(v,r) \ even}\sum_{k=0}^C\pi_{2n}(\s^C_v\ge k>\s^0_v) \\
& & +
\sum_{v\in T_m \atop d(v,r) \ odd}\sum_{k=0}^C\pi_{2n}(\s^0_v\ge k>\s^C_v)\\
& = & \sum_{v\in T_m \atop d(v,r) \ even}\sum_{k=0}^C\bigl[ \mu_{T_{2n},\l}^C(\s_v\ge k)-\mu_{T_{2n},\l}^0(\s_v\ge k)\bigr] \\
& & + \sum_{v\in T_m \atop d(v,r) \ odd}\sum_{k=0}^C\bigl[ \mu_{T_{2n},\l}^0(\s_v\ge k)-\mu_{T_{2n},\l}^C(\s_v\ge k)\bigr]\,.
\end{eqnarray*}
For simplicity, let us examine an even term $\mu_{T_{2n},\l}^C(\s_v\ge k)-\mu_{T_{2n},\l}^0(\s_v\ge k)$ and show that it tends to zero as $n\to \infty$.
Let $w$ be the immediate ancestor of $v$. By conditioning on the spin value at $w$ we can write
\begin{eqnarray*}
 & & \mu_{T_{2n},\l}^C(\s_v=i)-\mu_{T_{2n},\l}^0(\s_v=i)\\
 & =& \sum_{j=0}^{C-i}\mu_{T_{2n},\l}^C(\s_w=j)\bigl[\mu_{T_{2n},\l}^C(\s_v=i\tc \s_w=j)-
\mu_{T_{2n},\l}^0(\s_v=i\tc \s_w=j)\bigr]\\
& & +\sum_{j=0}^{C-i}\bigr[\mu_{T_{2n},\l}^C(\s_w=j)-\mu_{T_{2n},\l}^0(\s_w=j)\bigr]\mu_{T_{2n},\l}^0(\s_v=i\tc \s_w=j)\,.
\end{eqnarray*}
By iterating upwards until we reach the root, and using \eqref{tvd}, we see that it is enough to show that
\begin{equation*}
\lim_{n\to\infty}  \max_{v\in T_m}\max _{i\le C}\max_{j\le C-i} \left|\mu_{T_{2n},\l}^C(\s_v=i\tc \s_w=j)-
\mu_{T_{2n},\l}^0(\s_v=i\tc \s_w=j)\right|=0.
\end{equation*}
Now, let
\begin{equation}
\label{Z(i)}
Z_k^\tau(i) := \l^i\sum_{\s\in \O_{T_k\setminus \{r\}}^\tau}\prod_{v\in T_k\setminus \{r\}}\l^{\s_v}
\end{equation}
denote the partition function (or normalizing constant) on the complete finite
tree $T_k$ with boundary conditions $\tau$ and $\s_r=i$.  It is clear that
\begin{equation*}
  \frac{\bbP^\tau_{k,\l}(i)}{\bbP_{k,\l}^\tau(0)}=\frac{Z_k^\tau(i)}{Z_k^\tau(0)}\,.
\end{equation*}
Therefore,
\begin{gather*}
  \mu_{T_{2n},\l}^0(\s_v=i\tc \s_w=j)=\frac{Z_{2n-n_v}^0(i)}{\sum_{k\le C-j}Z_{2n-n_v}^0(k)}
=\frac{\bbP^0_{2n-n_v}(i)}{\bbP^0_{2n-n_v}(\s_r\le C-j)}\,,
\end{gather*}
where $n_v$ denotes the level of $v$ (the distance from the root).
A similar relation holds
 for the full boundary condition.

The proof is concluded once we observe that $n_v\le m$ and that
$$
\bbP^0_{2n-n_v,\l}(\s_r\le C-j)\ge \bbP^0_{1,\l}(0)>0.
$$
\end{proof}

\subsection{Recursions}
\label{subs-rec}

Our next step, as in many other spin models on trees,
is to set up a recursive
scheme to compute the relevant marginals $\bbP^{0}_{n,\lambda}$
and $\bbP^{C}_{n,\lambda}$.
In what follows, for simplicity we count the levels bottom-up and the
boundary conditions are at level $0$. Moreover, since the recursive scheme is
independent of the boundary conditions, and since we will never be considering
more than one value of $\l$ at a time, we drop both from our notation.

For $i = 0, \ldots, C$, and $n \in \bbN$, we set
\begin{equation*}
 Q_n(i):=  \frac{\bbP_n(i)}{\bbP_n(0)},\quad
R_n(i):= \frac{\sum_{k=0}^C Q_n(k)}{\sum^{C-i}_{k=0} Q_n(k)}=
\left[1-\bbP\bigl(\s_r> C-i\bigr)\right]^{-1}\,.
\end{equation*}
Thus $R_n(0)=1$ and  $R_n(i)\le R_n(i+1)$. Moreover, let $Z_n$ be
as defined in \eqref{Z(i)}, but with $\tau$ equal to the empty b.c.\.
Then we obtain the recursive equations
\begin{eqnarray}
Z_{n+1}(i) & = & \l^i \,\left[\sum_{k=0}^{C-i}Z_n(k)\right]^b, \nonumber \\
Q_{n+1}(i) & = & \l^i\,
  \left[\frac{\sum_{k=0}^{C-i}Q_n(k)}{\sum_{k=0}^{C}Q_n(k)}\right]^b =
  \frac{\l^i}{R_n^b(i)}, \nonumber \\
R_{n+1}(i) & = &
\frac{\sum_{k=0}^{C}\frac{\l^k}{R^b_n(k)}}{\sum_{k=0}^{C-i}\frac{\l^k}{R^b_n(k)}}\,.
  \label{rec:3}
\end{eqnarray}

The case when $C = 1$ (the usual hard core model) can therefore be studied
by analyzing a one-dimensional
 recursion governed by the following maps:
\begin{equation}
\label{jmaps}
 J(x):= \frac{\l}{(1+x)^b},\qquad   J_2(x):=
J(J(x))=\frac{\l}{(1+\frac{\l}{(1+x)^b})^b}.
\end{equation}
Indeed, $J$ defines the recursion for the quantity $Z_{n}(1)/Z_{n}(0)$,
while $J_2$ defines the recursion of this quantity
between two levels on the tree.
We close this section with a summary of the properties
of $J$ and $J_2$ which, when combined with Proposition \ref{criterium},
show  that $\l_{cr}(1) :=
  b^b/(b-1)^{b+1}$ is the phase transition point for the
standard hard core model (see, for example,  \cite{kel}), and that
the phase transition for $C=1$ is second-order.
These properties will turn out to also
 be useful for our analysis of the
higher-dimensional recursions (\ie when $C \geq 2$).
We start with the definition of an $S$-shaped
function.

\begin{definition}
\label{def-sshape}
A twice continuously differentiable
function $f:[0,\infty) \mapsto [0,\infty)$ is said to be
{\em $S$-shaped} if it has the following properties:
\begin{enumerate}
\item
it is  increasing on $[0,\infty)$ with
$f(0) > 0$ and $\sup_{x} f(x) < \infty$;
\item
there exists
$\overline{x} \in (0,\infty)$ such that the derivative $f'$ is monotone
increasing in the interval $(0,\overline{x})$ and monotone
decreasing in the interval $(\overline{x}, \infty)$; in other
words, $\overline{x}$ satisfies $f''(\overline{x}) = 0$ and
is the unique inflection point of $f$.
\end{enumerate}
\end{definition}

For future purpose, we observe here
that the definition immediately implies
that for any $\theta > 0$, and $S$-shaped function $f$,
 $\theta f$ is also an $S$-shaped function.
It is also  easy to verify that
any $S$-shaped function has at most three fixed points in $[0,\infty)$,
\ie points $x \in (0,\infty)$ such that $f(x) = x$.
   We now
summarize the salient properties of $J_2$ (see e.g. Fig.~\ref{fig:J2}), all of which may easily be verified with some calculus.
\begin{enumerate}
\item
$J_2$ is an $S$-shaped function with
  $J_2(0)=\lambda/(1+\lambda)^b$ and $\sup_x J_2(x)=\lambda$,
and a unique point of inflection $x_* \in (0,\infty)$.
\item $J$ has a unique fixed point, $x_0$, which is
also a fixed point of~$J_2$.
\item  If~$\l\le \l_{cr}(1)$
then $J_2'(x)\le 1$ for any $x\ge 0$ and $x_0$ is
  the unique fixed point of~$J_2$.
\item   If $\l > \l_{cr}(1)$,
then $J_2$ has three fixed points $x_- <
  x_0 < x_+$, where $J(x_-)=x_+$ and $J(x_+)=x_-$. Moreover $J_2'(x_0)>
1$, $J_2'(x)<1$ for $x\in [0,x_-]\cup [x_+,+\infty)$ and the three fixed
  points converge to $x_0(\l_{cr}(1))$ as $\l\downarrow \l_{cr}(1)$.
\end{enumerate}

\begin{figure}
\centering
\includegraphics[angle=-90, width=4in]{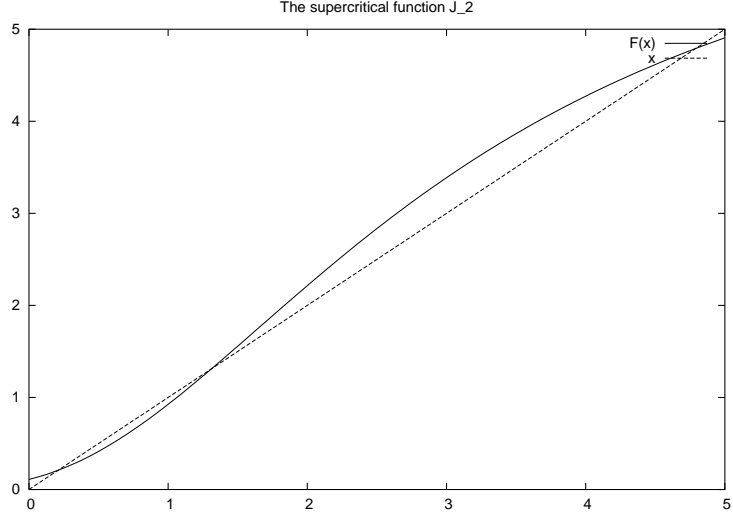}
\caption{Graph
 of the function $J_2(x)$ for $b=2,\ \lambda=7$ ($\l_{cr}=4$) }
 \label{fig:J2}
 \end{figure}

\section{Analysis of the recursions when  $C=2$} \label{sec-recc2}
When $C=2$ we have
$R_n(1)=\bigl[1-\bbP_n\bigl(\sigma_r  = 2 \bigr)\bigr]^{-1}$
and (\ref{rec:3}) can be written as:
\begin{eqnarray}
\nonumber
  R_n(0)&= & 1, \\
\label{rec-r2}
R_{n+1}(2) &= & 1+ \frac{\l}{R_n^b(1)} + \frac{\l^2}{R_n^b(2)}, \\
\nonumber
R_{n+1}(1) &= &  \frac{1+ \frac{\l}{R_n^b(1)} +
    \frac{\l^2}{R_n^b(2)}}{1+\frac{\l}{R_n^b(1)}} \quad =
\frac{R_{n+1}(2)}{1+\frac{\l}{R_n^b(1)}}. \,
  \end{eqnarray}
On replacing $n$ by $n-1$  in the last equation above, we see that
$$
R_{n}(2)=R_{n}(1)\left(1+\frac{\l}{R_{n-1}^b(1)}\right)\,.
$$
Substituting this back into \eqref{rec-r2}, we
obtain an exact two-step recursion for $Y_n:=R_n(1)$:
\begin{eqnarray}
  \label{eq:main_recursion}
  Y_{n+1} & = & 1+ \frac{\l^2}
{\left[1+\frac{\l}{Y_n^b}\right]\left[Y_n(1+\frac{\l}{Y_{n-1}^b})\right]^b}\\
\nonumber
& = & 1+ \frac{\l^2}
{\left[Y_n^b+ \l\right]\left[1+\frac{\l}{Y_{n-1}^b}\right]^b}\,.
\end{eqnarray}
It is useful to determine the initial conditions $(Y_0,Y_1)$ for the
recursion given the boundary conditions at the $0^{\rm th}$ level.
\begin{equation*}
  (Y_0,Y_1)=
  \begin{cases}
  (+\infty,1) & \text{ if the b.c.\ is full (\ie identically $C$)}\\
  (1,1+\frac{\l^2}{1+\l})  & \text{ if the b.c.\ is empty (\ie identically $0$)}\,.
  \end{cases}
\end{equation*}
Numerical calculations of
 (\ref{eq:main_recursion}) using Mathematica
strongly suggest that the critical value $\l_{cr}$, below which the recursion
 settles to a limit independent of the initial values, takes
approximately the following values:
\bigno
 \begin{center}
 \begin{tabular}{|c|c|} \hline
 {$b$} & {\hskip 1cm $\l_{cr}$} \\ \hline
 $2$ & \hskip 1cm $7.2753875$ \\ \hline
$3$  & \hskip 1cm $3.58029$ \\ \hline
$10$  & \hskip 1cm $1.107665$\\ \hline
$100$  & \hskip 1cm $0.2817409$\\ \hline
 \end{tabular}
 \end{center}
\bigno
and  that the transition is always first order (\ie if $\limsup_n Y_n\neq
\liminf_n Y_n$ then their difference is strictly larger than  some positive
constant $\d$).  Similar observations were made in \cite{ramzie02} (see Section
3.4 therein). Here, we provide a rigorous proof of these results.

Let us change variables from $Y_n$ to $X_n:=Y_n-1$ in
 (\ref{eq:main_recursion}).  It then follows that
\begin{align}
\label{Fp}  X_{n+1}&\le \frac{\l^2}
{\left[\min_{j\ge n}Y_j^b+ \l\right]\left[1+\frac{\l}{(1+X_{n-1})^b}\right]^b}
  \equiv F^{(n)}_+(X_{n-1}), \\
  X_{n+1}&\ge \frac{\l^2}
{\left[\max_{j\ge n}Y_j^b +
  \l\right]\left[1+\frac{\l}{(1+X_{n-1})^b}\right]^b}\equiv F^{(n)}_-(X_{n-1}).
\label{Fm}
\end{align}
The maps $F_\pm^{(n)}$ defined above can be rewritten in terms of the
map $J_2$ defined in \eqref{jmaps} as follows:
$$
\begin{array}{rcl}
\displaystyle  F^{(n)}_-(x) & = & \displaystyle
\frac{\l}{\left(\max_{j\ge n}Y_j^b +\l\right)}J_2(x); \\
 \displaystyle  F^{(n)}_+(x) & = & \displaystyle
\frac{\l}{\left(\min_{j\ge n}Y_j^b +\l\right)}J_2(x).
\end{array}
$$
Next, for $\kappa \geq 0$, we define
\begin{equation}
\label{fk}
 F_\kappa(x):=\frac{\l}{\kappa   +\l}J_2(x),
\end{equation}
so that $F_0 = J_2$.
For any $\kappa \geq 0$, $F_\kappa$ is a strictly positive
multiple of $J_2$ and hence is also an $S$-shaped function
(with the same inflection point $x_*$).
If we denote the fixed points of $F_\kappa$ by~\hbox{$
x_{-}^{(\kappa)}\le x_0^{(\kappa)}\le x_{+}^{(\kappa)}$} (with the obvious
  meaning) we see that:
\begin{enumerate}
 \item if $F_\kappa$ has a unique fixed point $x_0^{(\kappa)}$ then
  necessarily $x_0^{(\kappa)}< \min(x_-,x_0)$;
\item since $F'_\kappa(x)=\frac{\l}{\kappa +\l}J'_2(x)$ necessarily
  $F'_\kappa(x)\le 1$ for $x\le x_-^{(\kappa)}$;
\item the critical value $\l_c(\kappa)$ of $\l$ such that $F_\kappa$ starts to have
  three fixed points is increasing in $\kappa$. In particular,
\[ \l_c(\kappa)> \l_c(0)= \l_{cr}(1) = \frac{b^b}{(b-1)^{b+1}}; \]
\item  if $F_\kappa$ has three fixed points then
  necessarily $x_{-}^{(\kappa)}< x_-$ and $x_0 <
  x_0^{(\kappa)}<x_{+}^{(\kappa)}$;
\item the smallest fixed point $x_-^{(\kappa)}$ is continuously differentiable
in $\kappa>0$. Indeed, by the implicit function theorem and the fact that
$F_\kappa' (x_-^{(\kappa)}) < 1$, it follows that
\begin{eqnarray*}
  \label{eq:d2}
  \frac{d}{d\kappa}x_-^{(\kappa)} & = &
-\dfrac{\frac{\partial}{\partial\kappa} F_\kappa(x_-^{(\kappa)})}{
\frac{\partial }{ \partial x}F_\kappa(x_-^{(\kappa)}) - 1} \\
& = & -\dfrac{\frac{1}{\lambda + \kappa}F_\kappa(x_-^{(\kappa)})}{
\frac{\lambda}{\lambda + \kappa} J_2'(x_-^{(\kappa)}) - 1} \\
& = & -\frac{x_-^{(\kappa)}}{\l(1-J_2'(x_-^{(\kappa)}))+\kappa}.
\end{eqnarray*}
\end{enumerate}

In what follows, let
\begin{equation}
\label{mM}
 m:=\liminf_n X_n \quad \mbox{ and } \quad M:=\limsup_n X_n.
\end{equation}
We are now ready to prove our first result.
\begin{proposition} \label{prop-unique-fixed-point}
Assume that $\l > 0$ is such that $F_{1}$ has a unique fixed point. Then $M=m$
and hence the
recursion (\ref{eq:main_recursion}) has a unique fixed point.
\end{proposition}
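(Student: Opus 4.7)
The plan is to combine the sandwich bounds \eqref{Fp}--\eqref{Fm} with the shape properties of the family $\{F_\kappa\}_{\kappa\ge 1}$ to derive a pair of closed inequalities for $m$ and $M$, which under the hypothesis on $F_1$ can be closed by a monotone iteration.

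Since $Y_j = 1+X_j\ge 1$, both quantities $\kappa_\pm^{(n)}$ appearing in \eqref{Fp}--\eqref{Fm} are at least $1$; combined with property~3 (the critical value $\lambda_c(\kappa)$ is non-decreasing in $\kappa$), the hypothesis that $F_1$ has a unique fixed point implies that $F_\kappa$ has a unique fixed point $x_0^{(\kappa)}$ for every $\kappa\ge 1$. Setting $u_n := \min_{j\ge n} X_j \uparrow m$ and $v_n := \max_{j\ge n} X_j \downarrow M$, the bounds \eqref{Fp} and \eqref{Fm} read
\[
F_{(1+v_n)^b}(X_{n-1}) \le X_{n+1} \le F_{(1+u_n)^b}(X_{n-1}).
\]
Passing to limit superior and inferior in $n$, using joint continuity of $(\kappa,x)\mapsto F_\kappa(x)$ and monotonicity of $F_\kappa$ in $x$, I would extract
\[
M \le F_{(1+m)^b}(M), \qquad m \ge F_{(1+M)^b}(m).
\]

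Since each $F_\kappa$ with $\kappa\ge 1$ is $S$-shaped with unique fixed point $x_0^{(\kappa)}$, it satisfies $F_\kappa(x)\ge x$ iff $x\le x_0^{(\kappa)}$. Applying this to the displayed inequalities and setting $g(x):=x_0^{((1+x)^b)}$, which is continuous and strictly decreasing on $[0,\infty)$ by property~5, gives $M\le g(m)$ and $m\ge g(M)$. To close these, I construct a sandwich sequence: put $a_0:=0$ and $b_0:=\lambda$ (a valid upper bound for all $X_n$, since $X_{n+1}\le \frac{\lambda}{1+\lambda}J_2(X_{n-1})<\lambda$), and recursively $a_{k+1}:=g(b_k)$, $b_{k+1}:=g(a_k)$. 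A straightforward induction using the decreasingness of $g$ shows $a_k\uparrow a_\infty$, $b_k\downarrow b_\infty$, and $a_k\le m\le M\le b_k$ for every~$k$, while the limits satisfy the two-cycle relation $a_\infty=g(b_\infty)$, $b_\infty=g(a_\infty)$.

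The proposition therefore reduces to showing that the decreasing self-consistent map $g$ admits no non-trivial $2$-cycle, so that $a_\infty=b_\infty$; this is the main obstacle. The plan is to use the explicit formula of property~5 for $\frac{d}{d\kappa}x_0^{(\kappa)}$ together with the derivative bound $F_\kappa'(x_0^{(\kappa)})\le 1$ (which holds for every $\kappa\ge 1$ under our hypothesis, by the characterization of uniqueness for $S$-shaped maps) to establish the pointwise contraction estimate $|g'(x)|<1$ throughout the relevant range. This would make $g\circ g$ strictly contractive, ruling out $2$-cycles and forcing $a_\infty=b_\infty$; whence $m=M$ and the recursion \eqref{eq:main_recursion} has a unique limit.
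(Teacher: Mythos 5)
Your proposal follows the same route as the paper: sandwich the recursion by the one-parameter family $F_\kappa$, pass to the limit to get $M\le x_-^{((1+m)^b)}$ and $m\ge x_-^{((1+M)^b)}$, and then rule out $m<M$ by a derivative estimate on the decreasing map $g(x)=x_-^{((1+x)^b)}$ (the paper writes this as $M-m\le\int_m^M\bigl|\tfrac{d}{ds}x_-^{((1+s)^b)}\bigr|\,ds$ rather than via your two-cycle/sandwich iteration, but that repackaging is equivalent; likewise, where you deduce uniqueness of the fixed point of $F_\kappa$ for all $\kappa\ge1$ from property (3), the paper instead handles the possible three-fixed-point case directly, using $M\le x_0^{(1)}<x_0<x_0^{(\kappa)}$ via properties (1) and (4) --- a safer step, since your shortcut needs \emph{strict} monotonicity of $\l_c(\kappa)$).

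The genuine gap is that you stop exactly where the real work begins: the contraction estimate $|g'(x)|<1$ is asserted as a ``plan'' and the ingredients you cite do not deliver it. First, a technical point: the formula of property (5) has denominator $\l\bigl(1-J_2'(x_-^{(\kappa)})\bigr)+\kappa$, and to get $\bigl|\tfrac{d}{d\kappa}x_-^{(\kappa)}\bigr|\le x_-^{(\kappa)}/\kappa$ one needs $J_2'(x_-^{(\kappa)})\le 1$. The bound you invoke, $F_\kappa'(x_0^{(\kappa)})\le 1$, only gives $J_2'(x_-^{(\kappa)})\le(\kappa+\l)/\l$, under which the denominator can degenerate to $0$; note that under the hypothesis one may have $\l>\l_{cr}(1)$, so $J_2'\le 1$ is not global, and the paper obtains $J_2'(x_-^{(\kappa)})\le 1$ from the positional fact $x_-^{(\kappa)}<x_-$ (properties (1), (4)) combined with property (4) of $J_2$. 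Second, and decisively: even granting that bound, the chain rule only yields
\[
|g'(x)|\;\le\;\frac{b\,x_-^{((1+x)^b)}}{1+x}\;\le\;b\,x_-^{(1)},
\]
so everything reduces to the quantitative estimate $x_-^{(1)}<1/b$, \emph{uniformly in $\l>0$}. This is not a soft consequence of $S$-shapedness or of the uniqueness hypothesis; it is the analytic heart of the paper's proof, established by verifying $F_1(1/b)<1/b$ through explicit computation: one maximizes $\l\mapsto b\l\bigl(1+\l(1+\tfrac1b)^{-b}\bigr)^{-b}$ at $\l_{\max}=(1+1/b)^b/(b-1)$ to dispose of $\l<8$, and treats $\l\ge8$ with $b\ge3$ and the case $b=2$, $\l\ge8$ separately. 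Without this estimate your iteration scheme proves nothing: $a_\infty$ and $b_\infty$ could be a genuine two-cycle of $g$, which is precisely the phase-coexistence scenario.
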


\begin{proof}
Since $Y_n\ge 1$, it follows from (\ref{Fp}) and \eqref{fk} that
$X_{n+1}\le F_{1}(X_{n-1})$.  Since $F_{1}$ is $S$=shaped and is
assumed to  have a unique fixed point, this implies that
 $M \le x_0^{(1)}$.  Moreover, recalling that $m=\liminf_n X_n$, we see that
for any $\eps>0$, $X_n\ge m+\eps$
for all $n$ large enough. Hence, (\ref{Fp}) and \eqref{fk} imply that for
all large enough $n$, $X_{n+1}\le F_{\kappa}(X_{n-1})$ with
$\kappa =(1+m+\eps)^b$.  Thus we obtain
\begin{equation} \label{int1}
M \in (0,x_-^{(1+m+\eps)^b}).
\end{equation}
Indeed, if $F_\kappa$ has a unique fixed point, then (\ref{int1}) follows immediately. On the other hand, if
$F_\kappa$ has three fixed points then we immediately have
$M \in (0,x_-^{\kappa}) \cup (x_0^\kappa, x_+^\kappa)$. But $M \le x_0^{(1)}$ and so
$M < x_0$ (by property (1) of the $F_\kappa$'s) and also in this case $x_0 < x_0^{(\kappa)}$ (by property (4) of the $F_\kappa$'s), giving (\ref{int1}).

Since $\eps > 0$ is arbitrary in (\ref{int1}), we have in fact $M\le
x_-^{(1+m)^b}$. Similarly, using (\ref{Fm})  and \eqref{fk},
we see that $m\ge x_-^{(1+M)^b}$.
We want to conclude that necessarily $m=M$.  We write
\begin{equation*}
  M-m \le \int_m^M ds\ \left(-\frac{d}{ds} x_-^{((1+s)^b)}\right)\,,
\end{equation*}
and the sought statement will follow if, for example,
\begin{equation*}
  \label{eq:d1}
\sup_{m\le s\le M}\left|\frac{d}{ds} x_-^{((1+s)^b)}\right|<1\, .
\end{equation*}
By properties (1) and (4) of $F_\kappa$ it follows that $x_-^{(\kappa)} < x_0$, and
hence  property (4) of $J_2$ implies
 $J_2'(x_-^{(\kappa)})\le 1$.  When combined with the expression
for $d x_-^{(\kappa)}/d\kappa$ given in property
(5) of $F_\kappa$, this implies that
\begin{equation*}
  \label{eq:d4}
  \left|\frac{d}{d\kappa}x_-^{(\kappa)}\right|\le
  \frac{x_-^{(\kappa)}}{\kappa}\,,
\end{equation*}
and hence that
\begin{equation*}
  \label{eq:d5}
  \sup_{m\le s\le M}\left|\frac{d}{ds} x_-^{((1+s)^b)}\right|\le
\sup_{m\le s\le M} b\,\frac{x_-^{((1+s)^b)}}{1+s}\le b\, x_-^{(1)}\,
\end{equation*}
where the last inequality uses the fact that $x_-^{(\kappa)} < x_-^{(1)}$ for
any $\kappa > 0$.

Thus we have to show that $x_-^{(1)}<1/b$. For this purpose it is
enough to show that $F_1(1/b)< 1/b$. We compute
\begin{gather}
  \label{eq:x1}
  b\,F_1(1/b)=\frac{\l}{(1+\l)}\frac{b\l}{\left(1+\frac{\l}{(1+\frac 1b)^b}\right)^b}\,.
\end{gather}
Next, we observe that the map $\l \mapsto b\l/(1+\frac{\l}{(1+\frac
    1b)^b})^b$ achieves its maximum at $\l_{\rm
  max}=(1+1/b)^b/(b-1)$, where it is equal to
$\frac{b}{b-1}\left[\frac{b^2-1}{b^2}\right]^b$. The latter expression is
decreasing in $b$ for $b\ge 2$ and for $b=2$ it is equal to $\frac{18}{16}$.
Therefore, if $\l/(\l+1)<\frac{16}{18}$, \ie $\l<8$, then the r.h.s.
of (\ref{eq:x1}) is strictly less than one.
\medno
We now examine the case $\l\ge 8$. We write
\begin{equation*}
  \frac{b\l}{(1+\frac{\l}{(1+\frac 1b)^b})^b} \le \frac{\l
  b}{(1+\frac{\l}{\nep{}})^b} \le \frac{b\,
  \nep{b}}{\l^{b-1}}\le\frac{b\, \nep{b}}{8^{b-1}} < 1, \quad \text{for
  $b\ge 3$}\,.
\end{equation*}
Finally the case $b=2$ and $\l\ge 8$ is handled directly:
$$
 \frac{2\l}{(1+\frac{\l}{(1+\frac 12)^2})^2}=
 \frac{2\l}{(1+\frac{4\l}{9})^2}\le
 \frac{16}{(1+\frac{32}{9})^2}\approx 0.77\,.
$$
\end{proof}

Notice that in the proof of the inequality $x_-^{(1)}< 1/b$ we did not use the
hypothesis that $F_1$ has only one fixed point. Moreover, we proved
something slightly stronger, namely
\begin{equation} \label{rem-ve}
\mbox{there exists $\eps(b)>0$ such
that $1/b - x_-^{(1)}\ge \eps(b)$ for \emph{any} $\l$}.
\end{equation}

The following monotonicity property is an immediate consequence of
Proposition \ref{prop-unique-fixed-point}.
Recall that $\l_{cr}(1) = b^b/(b-1)^{b+1}$ is the phase transition
point for the usual ($C=1$) hard core model.

\begin{corollary}
\label{2-1}
For every $\l\le \l_{cr}(1)$,
 the $C=2$ multi-state hard core model has a unique Gibbs measure.
\end{corollary}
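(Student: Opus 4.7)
The plan is to apply Proposition \ref{prop-unique-fixed-point} and reduce the corollary to showing that for every $\lambda \le \lambda_{cr}(1)$ the map $F_1$ has a unique fixed point. Recall $F_\kappa(x) = \frac{\lambda}{\kappa+\lambda} J_2(x)$, and that by property (3) of $F_\kappa$ listed just above, the critical threshold $\lambda_c(\kappa)$ at which $F_\kappa$ acquires three fixed points is strictly increasing in $\kappa$, with $\lambda_c(0) = \lambda_{cr}(1) = b^b/(b-1)^{b+1}$. In particular $\lambda_c(1) > \lambda_{cr}(1)$, so for every $\lambda \le \lambda_{cr}(1)$ one has $\lambda < \lambda_c(1)$, which means $F_1$ has only one fixed point.

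Next I would invoke Proposition \ref{prop-unique-fixed-point}: under this hypothesis, $M = m$ in the notation of \eqref{mM}, so the two-step recursion \eqref{eq:main_recursion} for $Y_n = R_n(1)$ converges to a unique limit $Y_\infty$, independently of the initial pair $(Y_0, Y_1)$. In particular, the full b.c.\ initial condition $(+\infty, 1)$ and the empty b.c.\ initial condition $(1, 1+\lambda^2/(1+\lambda))$ produce the same limit for $R_n(1)$, hence (via the identity $R_n(2) = R_n(1)(1 + \lambda/R_{n-1}^b(1))$ derived just after \eqref{rec-r2}) also the same limit for $R_n(2)$. Since the marginal of $\sigma_r$ is determined by $(R_n(1), R_n(2))$ through the formula $\mathbb{P}_n(\sigma_r > C-i) = 1 - 1/R_n(i)$, this forces $\mathbb{P}^C_\lambda = \mathbb{P}^0_\lambda$ on $\sigma_r$.

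Finally I would apply Proposition \ref{criterium} to conclude $\mu^C_\lambda = \mu^0_\lambda$, and then use the sandwich $\mu^0_\lambda \prec \mu_\lambda \prec \mu^C_\lambda$ (valid for any other Gibbs measure $\mu_\lambda$, as noted right after Lemma \ref{lem-mon}) to deduce uniqueness of the Gibbs measure.

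There is no substantial obstacle: the corollary is a direct packaging of the earlier machinery. The only thing to verify carefully is the strict inequality $\lambda_c(1) > \lambda_{cr}(1)$, but this is exactly property (3) of the list of properties of $F_\kappa$, which follows from the fact that $F_\kappa = \tfrac{\lambda}{\kappa+\lambda} J_2$ is a strictly positive scaling of $J_2$ by a factor $<1$ when $\kappa>0$, so its graph lies strictly below that of $J_2$, delaying the birth of extra fixed points beyond $\lambda_{cr}(1)$.
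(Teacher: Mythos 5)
Your proposal is correct and follows essentially the same route as the paper's proof: show $F_1$ has a unique fixed point for $\l\le\l_{cr}(1)$, invoke Proposition \ref{prop-unique-fixed-point} to get $M=m$, and conclude via Proposition \ref{criterium} and Lemma \ref{lem-mon}. The only cosmetic difference is that you cite property (3) of $F_\kappa$ (monotonicity of $\l_c(\kappa)$) where the paper observes directly that $J_2$ having a single fixed point forces the same for $F_1=\tfrac{\l}{1+\l}J_2$ --- the same scaling fact either way --- and your added detail tracing $M=m$ through $(R_n(1),R_n(2))$ to $\bbP^C_\l=\bbP^0_\l$ is a harmless elaboration of what the paper leaves implicit.
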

\begin{proof}
If $J_2$ has only one fixed point then the same is true of $F_1$.
By Proposition \ref{prop-unique-fixed-point} there is then only one
fixed point for the recursion (\ref{eq:main_recursion}).  The result
then follows from Lemma \ref{lem-mon} and Proposition \ref{criterium}.
\end{proof}

The next result shows that the phase transition for $C = 2$ is first order.
Recall the definitions
of $M$ and $m$ given in (\ref{mM}) and let $\eps (b)$ be as in
\eqref{rem-ve}.
\begin{theorem}
\label{C=2}
If $m \neq M$ then $M-m> \eps(b)>0$.
\end{theorem}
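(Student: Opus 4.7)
The plan is to assume $m<M$ and show $M-m>\epsilon(b)$ by mimicking the proof of Proposition~\ref{prop-unique-fixed-point} step by step, now extracting the \emph{quantitative} bound from~\eqref{rem-ve} rather than just its qualitative form. First I would apply the same cluster-point / monotonicity analysis to obtain the self-consistency inequalities $M\le F_{(1+m)^b}(M)$ and $m\ge F_{(1+M)^b}(m)$. Since each $F_\kappa$ is $S$-shaped, these force $M$ (resp.\ $m$) into one of two intervals determined by the fixed points of the relevant $F_\kappa$, and I would split into cases accordingly.

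In the ``low'' case $M\le x_-^{((1+m)^b)}$, I would first rule out the alternative $m\ge x_+^{((1+M)^b)}$ using the chain
$m\le M\le x_-^{((1+m)^b)}\le x_- < x_0\le x_+^{((1+M)^b)}$,
which rests on the monotonicity of $x_-^{(\kappa)}$ and $x_+^{(\kappa)}$ in $\kappa$ together with $x_+^{(\kappa)}\ge x_0^{(\kappa)}\ge x_0$. Thus $m\ge x_-^{((1+M)^b)}$, and the integral estimate from the proof of Proposition~\ref{prop-unique-fixed-point}, combined with the uniform bound $b\,x_-^{(1)}\le 1-b\,\epsilon(b)$ extracted from~\eqref{rem-ve}, yields $M-m\le (1-b\,\epsilon(b))(M-m)$, which forces $M=m$ and contradicts $m<M$.

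Hence one is in the ``high'' case $M\ge x_0^{((1+m)^b)}$. Here I would observe that the argument for $F_1(1/b)<1/b$ in the proof of Proposition~\ref{prop-unique-fixed-point} extends by monotonicity of $F_\kappa(x)$ in $\kappa$ to $F_\kappa(1/b)<1/b$ for every $\kappa\ge 1$, so $1/b$ lies in the below-diagonal region of $F_{(1+m)^b}$. After ruling out the upper sub-interval $[x_+^{((1+m)^b)},\infty)$, one obtains $x_-^{((1+m)^b)}\le 1/b\le x_0^{((1+m)^b)}$, whence
\[
x_0^{((1+m)^b)}-x_-^{((1+m)^b)}\ge 1/b-x_-^{(1)}\ge\epsilon(b)
\]
by~\eqref{rem-ve}. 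A parallel dichotomy for $m$ (where an integral estimate now on the upper branch $x_+^{(\kappa)}$ rules out $m\ge x_+^{((1+M)^b)}$) then pins $m\le x_-^{((1+m)^b)}$, giving $M-m\ge\epsilon(b)$ as required.

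The most delicate step is this last forcing argument: showing that whenever $M$ lies on the high fixed-point branch of $F_{(1+m)^b}$, the companion value $m$ is pinned below $x_-^{((1+m)^b)}$. This is what captures the first-order character of the transition, and I expect it will require exploiting the mixed monotonicity of the two-step recursion $(Y_{n-1},Y_n)\mapsto Y_{n+1}$ beyond the one-dimensional self-consistency alone.
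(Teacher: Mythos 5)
Your reduction to the one-dimensional self-consistency inequalities $M\le F_{(1+m)^b}(M)$, $m\ge F_{(1+M)^b}(m)$ is sound as far as it goes, and your ``low case'' correctly reproduces the paper's contraction endgame. But the step you yourself flag as delicate --- pinning $m\le x_-^{((1+m)^b)}$ whenever $M$ sits on the high branch --- is a genuine gap, and it cannot be closed by the tools you propose. The self-consistency inequalities only place $m\in[x_-^{(\kappa_M)},x_0^{(\kappa_M)}]\cup[x_+^{(\kappa_M)},\infty)$ and $M\in[0,x_-^{(\kappa_m)}]\cup[x_0^{(\kappa_m)},x_+^{(\kappa_m)}]$; nothing in them excludes the configuration where $m$ lies just below $x_0^{(\kappa_M)}$ and $M$ just above $x_0^{(\kappa_m)}$, with $M-m$ arbitrarily small, since $F(x)>x$ holds immediately above the middle fixed point and $F(x)<x$ immediately below it. So no positive lower bound on $M-m$ can come out of the fixed-point dichotomy alone. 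The paper closes this with an input you are missing: since $X_n=[1-\bbP_n(\s_r=2)]^{-1}-1$, the odd-$n$ values are extremal in the boundary condition, and the sequence started from the full b.c.\ has $X_1=0$, whence for odd $n$ one gets $X_n\le U_n$ with $U_{n+2}=F_1(U_n)$, $U_1=0$; this iteration \emph{increases to the smallest fixed point}, giving unconditionally $m\le x_-^{(1)}\le 1/b-\eps(b)$. With $m$ so pinned, the argument never needs your high/low dichotomy for $m$: if $M\le m+\eps(b)$ then $M\le 1/b<x_0<x_0^{(1)}$ (here the paper proves $x_0>1/b$ by checking $J(1/b)>1/b$ using $\l>\l_{cr}(1)=b^b/(b-1)^{b+1}$), iteration of $X_{n+1}\le F_1(X_{n-1})$ drags $M$ below $x_-^{(1)}$, and Proposition \ref{prop-unique-fixed-point}'s contraction forces $m=M$, a contradiction.

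Two further points in your sketch would fail or need repair. First, in your high case you ``rule out the upper sub-interval'' $[x_+^{((1+m)^b)},\infty)$ for $1/b$ without argument; this is exactly where the paper's computation $x_0>1/b$ is needed, since property (4) of the $F_\kappa$'s gives $x_+^{(\kappa)}>x_0^{(\kappa)}>x_0>1/b$, and without it $1/b>x_+^{(\kappa_m)}$ is not excluded by $F_\kappa(1/b)<1/b$ alone. Second, your proposed ``integral estimate on the upper branch $x_+^{(\kappa)}$'' has no analogue of the contraction property: the bound $\bigl|\frac{d}{d\kappa}x_-^{(\kappa)}\bigr|\le x_-^{(\kappa)}/\kappa$ used $J_2'(x_-^{(\kappa)})\le 1$ (valid because $x_-^{(\kappa)}<x_0$) together with the smallness $x_-^{(1)}<1/b$ from \eqref{rem-ve}; at $x_+^{(\kappa)}$ one only knows $F_\kappa'(x_+^{(\kappa)})\le 1$, i.e.\ $J_2'(x_+^{(\kappa)})\le(\kappa+\l)/\l$, and $x_+^{(1)}$ is of order $\l$ rather than $o(1/b)$, so $b\,x_+^{(1)}<1$ is false in general. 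Your instinct at the end --- that the one-dimensional self-consistency must be supplemented by information from the actual two-step recursion --- is correct; the specific missing idea is the monotone iteration from the extremal (full) boundary condition, not a finer analysis of the fixed-point branches.
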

\begin{proof}
Suppose $m \neq M$.  From Proposition~\ref{prop-unique-fixed-point}, it then
follows that $F_1$ (and {\em a
fortiori} $J_2$) has three
fixed points $x_-^{(1)}<x_0^{(1)}<x_+^{(1)}$, with $x_0^{(1)}> x_0$.
We now show that $x_0 > 1/b$. Indeed, since $J(x_0)=x_0$ and $J$
is strictly decreasing, it is enough to check that $J(1/b)> 1/b$ or,
equivalently, that  $\l/(1+\frac 1b)^b >1/b$.
But $\l>\frac{b^{b}}{(b-1)^{b+1}}$ and clearly
$$
\frac{b^b}{(b-1)^{b+1}(1+\frac{1}{b})^b}=\frac{b^{2b}}{\left(b^2-1\right)^b(b-1)} > \frac 1b\,.
$$
Since $1/b - x_-^{(1)} \geq \eps (b)$ by \eqref{rem-ve},
this implies $x_0^{(1)}-x_-^{(1)}>\eps(b)$.

Next, since $X_n= \left[1-\bbP_n\left(\s_r = 2\right)\right]^{-1}-1$, we infer that $X_n$
is maximized by the empty b.c.\ and minimized by the full b.c.\ if $n$ is
odd (and vice versa if
$n$ is even). Thus,
using the recursive inequality $X_{n+1}\le
F_1(X_{n-1})$, we obtain for any odd $n$, the inequality
$X_n\le U_n$, where  $\{U_n, n \mbox{ odd} \}$ is the sequence
that satisfies the recursion $U_{n+2}=F_1(U_n)$, with
$U_1 = 0$.
In particular, $m\le x_-^{(1)}\le \frac{1}{b}
-\eps(b)$. If now $M\le m+\eps(b)<x_0^{(1)}$ then necessarily $X_n <
  x_0^{(1)}$ for any $n$ large enough and repeated iterations of $X_{n+1}\le
F_1(X_{n-1})$ imply $M\le x_-^{(1)}$. At this stage we are back in the
  framework of the proof of  Proposition \ref{prop-unique-fixed-point} and $m=M$, resulting in  a contradiction.
\end{proof}

\section{The Large $b$ Asymptotic Regime} \label{sec-largeb}
In this section we set up and then analyze the recursion for any value of $C$
when $b$ is large. In what follows, $e = \exp(1)$.

For any $j\le C$ set $j^*=C-j$. Also, for $\l < 1$, set
$A_\l =\sum_{i=0}^\infty \l^i= (1-\l)^{-1}$. Iterating (\ref{rec:3}) we obtain
$$
R_{n+2}(j)=1 +\frac{\displaystyle\sum_{i=j^*+1}^C
  \l^i\left(\displaystyle\sum_{k=0}^{i^*}\frac{\l^k}{R_n^b(k)}\right)^b}
{\displaystyle\sum^{j^*}_{i=0} \l^i\left(\displaystyle\sum_{k=0}^{i^*}\frac{\l^k}{R_n^b(k)}\right)^b}.
$$
In turn, this implies that
\begin{eqnarray*}
  \label{eq:R2}
R_{n+2}(j) & \le & 1 +\frac{A_\l \l^{j^*+1}\left(\displaystyle\sum_{k=0}^{j-1}\frac{\l^k}{R_n^b(k)}\right)^b}
{\left(\displaystyle\sum_{k=0}^{j-1}\frac{\l^k}{R_n^b(k)}
+\sum_{k=j}^{C}\frac{\l^k}{R_n^b(k)}\right)^b} \\
& = &  1 +\frac{A_{\l}\l^{j^*+1}}
{\displaystyle \left(1 +
  \frac{\sum_{k=j}^{C}\frac{\l^k}{R_n^b(k)}}{\sum_{k=0}^{j-1}\frac{\l^k}{R_n^b(k)}}\right)^b}\\
& \le & 1 +\frac{A_\l \l^{j^*+1}}
{\left(\displaystyle 1 + A_\l^{-1}\frac{\l^j}{R_n^b(j)}\right)^b}\,.
\end{eqnarray*}
Therefore, by letting $X_n(j)=R_n(j)-1$ we have
\begin{equation}
  \label{eq:R3}
  X_{n+2}(j)\le A^2_\l \l^{j^*-j+1}J_2^{(\l_j)}(X_n)\equiv F^{(j)}_+(X_n(j))\,,
\end{equation}
where $\l_j:=A_\l^{-1}\l^j$, and
 $J^{(\l)} = J$ and $J_2^{(\l)} = J_2$ are the maps
defined in (\ref{jmaps}), but with the $\l$ dependence now denoted
explicitly.

In a similar fashion, we obtain a lower bound
\begin{eqnarray*}
  \label{eq:R2bis}
R_{n+2}(j) & \ge & 1 +\frac{\l^{j^*+1}\left(\displaystyle\sum_{k=0}^{j-1}\frac{\l^k}{R_n^b(k)}\right)^b}
{A_\l\left(\displaystyle\sum_{k=0}^{j-1}\frac{\l^k}{R_n^b(k)}
+\sum_{k=j}^{C}\frac{\l^k}{R_n^b(k)}\right)} \\
& = &  1 +\frac{A_\l^{-1}\l^{j^*+1}}
{\displaystyle \left(1 +
  \frac{\sum_{k=j}^{C}\frac{\l^k}{R_n^b(k)}}{\sum_{k=0}^{j-1}\frac{\l^k}{R_n^b(k)}}\right)^b}\\
& \ge & 1 +\frac{A_\l^{-1}\l^{j^*+1}}
{\left(\displaystyle 1 + A_\l\frac{\l^j}{R_n^b(j)}\right)^b}.
\end{eqnarray*}
Therefore, we have
\begin{equation}
  \label{eq:R3bis}
  X_{n+2}(j)\ge A_\l^{-2}\l^{j^*-j+1}J_2^{(\l'_j)}(X_n)\equiv F^{(j)}_-(X_n(j))\,,
\end{equation}
where  $\l'_j:=A_\l\l^j$.

\subsection{The case of $C$ odd}
We start by stating the main result of the section.
Recall that for $\lambda < 1$, $A_\l = (1-\lambda)^{-1}$.
\begin{theorem}
\label{critical}
Let $j_c=\ceil{\frac C2}$, and define $\lm := A_\l^{-1}\l^{j_c}$ and
$\lp := A_\l \l^{j_c}$. Then the following two properties hold:
\begin{enumerate}
\item
If
$\left(\frac{\g}{b}\right)^{\frac {1}{j_c}}\le \l<1$ with
$\g>\nep{\phantom{1}}$, then, for any $b$ large enough depending on $\g$,
the smallest fixed
point of
\begin{equation}
  \label{eq:A}
x \mapsto A_\l^{2}J_2^{(\lm)}(x)
\end{equation}
is strictly smaller than the largest fixed point of
\begin{equation}
  \label{eq:B}
x \mapsto A_\l^{-2}J_2^{(\lp)}(x)\,.
\end{equation}
In particular, there is phase coexistence.
\item
On the other hand, if $\l\le \left(\frac{\g'}{b}\right)^{\frac {1}{j_c}}$
with $\g'<\nep{}$ then, for
every $b$ large enough, depending on $\g'$, there is a unique Gibbs measure.
\end{enumerate}
\end{theorem}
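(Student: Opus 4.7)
The plan is to analyze the bounds \eqref{eq:R3} and \eqref{eq:R3bis} at the distinguished index $j = j_c := \lceil C/2\rceil$. Since $C$ is odd, $j_c^* - j_c + 1 = 0$, so the polynomial factor $\lambda^{j^*-j+1}$ in those bounds equals $1$ and the single-coordinate recursion for $X_n(j_c)$ obeys
\begin{align*}
X_{n+2}(j_c) &\le F_+(X_n(j_c)) := A_\lambda^{2}\,J_2^{(\lm)}(X_n(j_c)),\\
X_{n+2}(j_c) &\ge F_-(X_n(j_c)) := A_\lambda^{-2}\,J_2^{(\lp)}(X_n(j_c)).
\end{align*}
Both $F_\pm$ are positive multiples of the $S$-shaped function $J_2$, hence $S$-shaped themselves with at most three fixed points. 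I focus on $\lambda$ near the threshold $(\gamma/b)^{1/j_c}$ (respectively $(\gamma'/b)^{1/j_c}$); since the threshold tends to $0$ as $b\to\infty$, we have $A_\lambda = 1 + O(\lambda) \to 1$ and $\lm,\lp = \lambda^{j_c}(1 + O(\lambda))$. Setting $c_\pm := b\lambda_\pm$ and rescaling $x = u/b$ while using $(1+u/b)^b \to e^u$, the fixed-point equations for $F_\pm$ both reduce, to leading order, to the common limit equation $u\,e^{\gamma e^{-u}} = \gamma$ as $c_\pm \to \gamma$.

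For Part (1), with $\gamma > e$, the limit equation has three distinct positive roots $u_- < u_0 < u_+$. By continuous dependence of fixed points on parameters (the roots at $u_\pm$ are transversal attractors of the iteration), for $b$ large, $F_+$ and $F_-$ each have three fixed points, with the smallest fixed point $\bar x_-^{(+)}$ of $F_+$ close to $u_-/b$ and the largest fixed point $\bar x_+^{(-)}$ of $F_-$ close to $u_+/b$; in particular $\bar x_-^{(+)} < \bar x_+^{(-)}$. To conclude phase coexistence I would then invoke Lemma \ref{lem-mon}: under empty b.c.\ the sequence $X_n(j_c)$ starts at $0$ and is monotone along its even and odd subsequences, so iterating $X_{n+2} \le F_+(X_n)$ gives $\limsup_n X_n(j_c) \le \bar x_-^{(+)}$; under full b.c.\ the starting value is $X_0(j_c) = +\infty$ and the bound $X_{n+2} \ge F_-(X_n)$ forces $\liminf_n X_n(j_c) \ge \bar x_+^{(-)}$. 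Hence the root marginals under $\mu_\lambda^0$ and $\mu_\lambda^C$ differ, and Proposition \ref{criterium} delivers phase coexistence.

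For Part (2), with $\gamma' < e$, the limit equation $u\,e^{\gamma' e^{-u}} = \gamma'$ has a unique positive solution, so $F_+$ and $F_-$ each have a unique fixed point for $b$ large. A squeeze argument modeled on Proposition \ref{prop-unique-fixed-point}, using that the derivative $F'_\pm$ at the unique fixed point is bounded away from $1$ (a direct consequence of $\gamma' < e$ after rescaling), forces $\liminf_n X_n(j_c) = \limsup_n X_n(j_c)$. For the remaining indices $j \neq j_c$, the prefactor $\lambda^{j^*-j+1}$ in \eqref{eq:R3}--\eqref{eq:R3bis} keeps the bounding map below threshold: for $j < j_c$ it is a high power of small $\lambda$, while for $j > j_c$ one checks that the combination $A_\lambda^{\pm 3}\lambda^{j^*+1}$ remains small. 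Hence $\bbP_\lambda^0 = \bbP_\lambda^C$, and Proposition \ref{criterium} yields uniqueness of the Gibbs measure.

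The main technical obstacle is uniform control of the perturbation from $J_2^{(\gamma/b)}$ to $F_\pm$: one must show that the errors $(1+u/b)^b - e^u = O(u^2/b)$ and $c_\pm - \gamma = O(\lambda\gamma)$ vanish fast enough to preserve the separation of the three roots in Part (1) and to make the squeeze work in Part (2). In particular, the Part (2) squeeze requires that the difference $\lp - \lm = O(\lambda \cdot \lambda^{j_c})$ between the bounding maps is small compared with the spectral gap $1 - F'_\pm(\mbox{fixed point})$, in analogy with the bound $x_-^{(1)} < 1/b$ established in Proposition \ref{prop-unique-fixed-point}. Finally, the theorem is stated for all $\lambda \in [(\gamma/b)^{1/j_c}, 1)$ (respectively all $\lambda < (\gamma'/b)^{1/j_c}$); extending the near-critical analysis to this full range presumably requires an additional monotonicity-in-$\lambda$ argument for the bounding recursions.
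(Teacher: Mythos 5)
Your Part (2) is where the proposal breaks down, and the gap is structural, not technical. A squeeze ``modeled on Proposition \ref{prop-unique-fixed-point}'' cannot be carried out here: in the $C=2$ case the paper had an \emph{exact} scalar two-step recursion \eqref{eq:main_recursion}, so the upper and lower bounding maps were the \emph{same} $S$-shaped function $F_\kappa$ with a self-consistently determined parameter $\kappa=(1+m)^b$, and the bootstrap $M\le x_-^{((1+m)^b)}$, $m\ge x_-^{((1+M)^b)}$ plus the derivative bound $|dx_-^{(\kappa)}/d\kappa|\le x_-^{(\kappa)}/\kappa$ could force $M=m$ exactly. For general odd $C$ there is no exact scalar recursion for $X_n(j_c)$ (the true recursion \eqref{rec:3} is $C$-dimensional), and your two bounding maps $A_\l^{2}J_2^{(\lm)}$ and $A_\l^{-2}J_2^{(\lp)}$ genuinely differ: $\lp=A_\l^{2}\lm$, so even when each has a unique fixed point, those fixed points are separated by a relative amount of order $\l>0$ at every finite $b$. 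The squeeze therefore yields only that $\liminf_n X_n(j_c)$ and $\limsup_n X_n(j_c)$, under \emph{both} boundary conditions, lie in a nondegenerate interval $[x^{-},x^{+}]$ with $x^{-}<x^{+}$; it can never produce the exact equality $\bbP^0_\l=\bbP^C_\l$ that Proposition \ref{criterium} requires (and your treatment of $j\neq j_c$ has the same defect, besides the fact that for $j>j_c$ the prefactor $\l^{j^*-j+1}$ is a \emph{negative} power of $\l$, so those coordinates are not small at all). The paper's actual proof abandons the fixed-point route entirely: it converts the quantitative bounds into a decay-of-influence statement. Lemmas \ref{vardist} and \ref{vardist2} show that the conditional one-site marginals satisfy $b\,\mu^\t(\s_r\ge i^*+1)\le a<1$ for $i=j_c$ and $\approx 0$ for $i\le\inte{C/2}$ (here the unique-fixed-point analysis of $H_{\g'}$ enters, but only to extract the constant $a<1$), together with the superexponential bound \eqref{3.16}. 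Then a path-coupling argument along the path from a single changed boundary spin to the root, phrased as a non-homogeneous Markov chain with bad set $B$ and diagonal $D$ satisfying \eqref{Markov1}--\eqref{Markov2}, shows the influence of one boundary spin decays like $(a/b)^\ell$, which beats the $b^\ell$ growth of the boundary. This coupling mechanism is the missing key idea in your proposal; without it, uniqueness does not follow from any amount of sharpening of the fixed-point bounds.

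Your Part (1) is essentially the paper's argument in the critical window: the rescaling $x=u/b$, the limit map $H_\g(z)=\g\nep{-\g\nep{-z}}$ (your equation $u\,\nep{\g\nep{-u}}=\g$ is the same fixed-point equation), and the three roots for $\g>\nep{}$ match \eqref{eq:limit} and Lemma \ref{hmap}; your deduction of coexistence from the fixed-point separation via trapping of the iterates is also sound (the paper uses $\inte{C/2}$ and $\ceil{C/2}$ boundary conditions instead of empty/full, an inessential difference). However, deferring the extension to the full range $\left(\frac{\g}{b}\right)^{1/j_c}\le\l<1$ to a presumed ``monotonicity-in-$\l$ argument'' leaves a real gap, and such monotonicity is not available: $\partial_\g H_\g(z)=\nep{-\g\nep{-z}}\left(1-\g\nep{-z}\right)$ changes sign at $z=\log\g$, so the bounding maps are not monotone in the activity in the relevant region. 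The paper instead treats the whole range by direct computation: for the upper map it evaluates $A_\l^{2}J_2^{(\lm)}(\tilde{x}_-)$ at $\tilde{x}_-=(\log\g-\log\log\g)/b$ and shows $f_\g(\l)=\l/(1+\l\log\g/\g)^b$ is decreasing for $\l>\g/((b-1)\log\g)$, reducing to the critical case; for the lower map it splits into the cases $b\lp\to\infty$ and $\lp\le C/b$ with the test point $x_\l=\log(b\lp)/b$. As written, your argument proves Part (1) only for $\l$ in the critical scaling window.
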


We start by establishing the first assertion of the theorem.  Our
proof will make use of the following elementary observation.
\begin{lemma}
\label{hmap}
    For $\g>0$ the function  $H_\g:[0,\infty)\mapsto [0,\infty)$ defined by
\[ H_\g(z)=\g\,\nep{-\g\nep{-z}}, \quad z \in [0,\infty),
\]
 is $S$-shaped. In addition, the following two properties hold:
\begin{enumerate}
\item if $\g\le \nep{}$ then $H_\g$ has one
    fixed point $z_0 < 1$;
\item if $\g>\nep{}$ then
$H_\g$ has three distinct fixed points $z_-<z_0<z_+$ that satisfy
\begin{equation}
\label{tx}
0\le z_-\le
\log(\g)-\log(\log(\g))<z_0\le \log \g<z_+\,.
\end{equation}
\end{enumerate}
\end{lemma}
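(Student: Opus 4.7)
The plan is to exploit the factorization $H_\g=\phi\circ\phi$, where $\phi(z):=\g\nep{-z}$, so that fixed points of $H_\g$ are either fixed points of $\phi$ or pairs of period-two orbits of $\phi$. The strictly decreasing map $\phi$ has a unique fixed point $z_0=W(\g)$ (the Lambert-$W$ value), determined by $z_0\nep{z_0}=\g$, and $H_\g'(z_0)=\phi'(z_0)^2=z_0^2$; this identity will be the pivot of the whole analysis.

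First I would check the $S$-shape by direct computation: $H_\g(0)=\g\nep{-\g}>0$, $\sup_{z}H_\g(z)=\g<\infty$, $H_\g'(z)=\g^2\nep{-z-\g\nep{-z}}>0$, and
\[ H_\g''(z)=H_\g'(z)\bigl(\g\nep{-z}-1\bigr), \]
whose unique sign change occurs at $\bar z=\log\g$. Next, since $H_\g(z)-z$ is positive at $z=0$ and diverges to $-\infty$, it has an odd number of zeros, and the $S$-shape caps this number at three. In any triple $a<b<c$ of fixed points the middle crossing is low-to-high, forcing $H_\g'(b)\ge 1$; and because $\phi$ is strictly decreasing, any period-two orbit $\{z_-,z_+\}$ of $\phi$ straddles $z_0$, so $z_0$ is necessarily the middle fixed point of any triple. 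Hence three fixed points can occur only if $H_\g'(z_0)=z_0^2\ge 1$, i.e., $z_0\ge 1$, i.e., $\g\ge \nep{}$. This settles claim (1): for $\g\le \nep{}$ the unique fixed point is $z_0=W(\g)\le 1$, strictly less than $1$ when $\g<\nep{}$. Conversely, for $\g>\nep{}$, $z_0>1$ gives $H_\g'(z_0)>1$, and the intermediate value theorem (with $H_\g(0)>0$ and $\lim_{z\to\infty}(H_\g(z)-z)=-\infty$) yields crossings on both sides of $z_0$, producing the three fixed points $z_-<z_0<z_+$ of claim (2).

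Finally, I would derive the bounds \eqref{tx} by elementary substitutions exploiting strict monotonicity of $z\mapsto z\nep{z}$. The middle inequalities $\log\g-\log\log\g<z_0\le \log\g$ follow at once from $(\log\g-\log\log\g)\nep{\log\g-\log\log\g}=\g(1-\log\log\g/\log\g)<\g$ and $\log\g\cdot\nep{\log\g}=\g\log\g>\g$. For the outer bounds I would evaluate $H_\g$ at the two candidate points: $H_\g(\log\g)=\g/\nep{}>\log\g$ for $\g>\nep{}$ places $\log\g\in(z_0,z_+)$ and gives $z_+>\log\g$; similarly $H_\g(\log\g-\log\log\g)=1\le \log\g-\log\log\g$ places $\log\g-\log\log\g\in[z_-,z_0)$ and gives $z_-\le \log\g-\log\log\g$. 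Both of the latter inequalities reduce to the scalar convexity fact $\g\ge \nep{}\log\g$ (the function $\g\mapsto \g-\nep{}\log\g$ has minimum $0$ at $\g=\nep{}$), which is the only mild technical step I foresee; the qualitative skeleton is delivered essentially for free by the $\phi\circ\phi$ decomposition together with the $S$-shape.
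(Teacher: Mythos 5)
Your argument is correct in substance, and for the fixed-point counting it takes a genuinely different route from the paper's. The paper works directly with $H_\g$: for $\g<\nep{}$ it bounds $\sup_z H_\g'(z)<1$ to get uniqueness (deducing $z_0<1$ from $H_\g(1)<1$), and for $\g>\nep{}$ it checks the three test-point inequalities $H_\g'(\log\g)>1$, $H_\g(\log\g)>\log\g$, $H_\g(\log\g-\log\log\g)<\log\g-\log\log\g$ and invokes the $S$-shape. Your decomposition $H_\g=\phi\circ\phi$, with the identity $H_\g'(z_0)=\phi'(z_0)^2=z_0^2$ at $z_0=W(\g)$, appears nowhere in the paper, and it buys two things the paper leaves implicit: the number of distinct fixed points is exactly one or three (a second fixed point is a period-two point of the decreasing map $\phi$ and drags in its partner), and the middle fixed point of any triple is pinned at $W(\g)$, so the threshold $\g=\nep{}$ emerges from $z_0^2\ge 1$ rather than from a supremum computation. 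For the bounds \eqref{tx} you and the paper perform the same two evaluations, $H_\g(\log\g)=\g/\nep{}$ and $H_\g(\log\g-\log\log\g)=1$, the latter compared with $\log\g-\log\log\g\ge 1$, \ie $\g\ge \nep{}\log\g$. Incidentally, your boundary values are the corrected ones --- $H_\g(0)=\g\nep{-\g}$ and $\sup_z H_\g(z)=\g$ (the paper's proof has these two swapped) --- and your $z_0=W(\g)\le 1$, strict exactly when $\g<\nep{}$, matches the paper's proof, which likewise finds $z_0=1$ at $\g=\nep{}$ despite the strict ``$z_0<1$'' in the statement.

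One spot where your argument as written does not close is the endpoint $\g=\nep{}$, which claim (1) includes: your necessary condition for a triple is the non-strict $z_0^2\ge 1$, \ie $\g\ge\nep{}$, so at $\g=\nep{}$ nothing is settled, and ``this settles claim (1)'' is justified only for $\g<\nep{}$. The patch is short: three distinct fixed points $z_-<z_0<z_+$ give, by the mean value theorem, two distinct points $\xi_1\in(z_-,z_0)$ and $\xi_2\in(z_0,z_+)$ with $H_\g'(\xi_i)=1$; but at $\g=\nep{}$ one has $H_\g'(z)=\nep{2-z-\nep{1-z}}\le 1$ with equality only at $z=1$ (since $1+u\le \nep{u}$), so no such pair exists and the fixed point is unique. (The paper also disposes of $\g=\nep{}$ by assertion, so the lacuna is shared, but your criterion makes the non-strictness visible.) Two cosmetic points: ``odd number of zeros'' should read ``odd number of sign changes,'' since a tangential fixed point changes no sign --- though your $\phi$-orbit argument already yields the one-or-three count independently; and for $\g\le 1$ the inflection point $\log\g$ falls outside $(0,\infty)$, so $H_\g$ is $S$-shaped in the sense of Definition~\ref{def-sshape} only for $\g>1$ --- a blemish in the lemma itself that the paper's proof shares and that is harmless in the regime $\g$ near $\nep{}$ where the lemma is applied.
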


\begin{proof}
The function $H_\g$ is clearly twice continuously differentiable, satisfies
$H_\g(0) = \g > 0$ and $\sup_{x} H_\g (x) = \g e^{-\g}< \infty$.
That it is $S$-shaped therefore follows from the fact that
$$
H_\g'(z)=\g\nep{-z}H_\g(z)>0\,\quad \text{and} \quad
H_\g''(z)=\g\nep{-z}H_\g(z)\bigl[\g\nep{-z}-1\bigr]\,.
$$
Now suppose $\g<\nep{}$. Then $\sup_z H_\g'(z)<1$ and therefore there
exists a unique fixed point $z_0$. The fact that $z_0 <1$ follows
from the observation that
$$
H_\g(1)=\g\nep{-\g\nep{-1}}<1\,.
$$
On the other hand, if $\g=\nep{}$ the value
$z_0=\log \g$ is the unique fixed point, and satisfies $H_\g'(z_0)=1$.
Lastly, for $\g>\nep{}$, we have the inequalities
\begin{eqnarray*}
H_\g'(\log \g) & > & 1, \\
H_\g(\log \g) & > & \log \g, \\
H_\g\bigl(\log \g-\log(\log \g)\bigr) & < & \log \g-\log(\log \g),
\end{eqnarray*}
where the last inequality  holds because
$H_\g\bigl(\log \g-\log(\log \g)\bigr)=1$
and $\g\mapsto \log \g-\log(\log \g)$ restricted to the interval
$[\nep{},\infty)$ is increasing with $\log(\nep{})-\log(\log(\nep{}))=1$.
Together with the $S$-shaped property of $H$, these inequalities
immediately imply that $H$ has three fixed points that satisfy
(\ref{tx}).
\end{proof}

We are now ready to establish the first statement of Theorem \ref{critical}.

\begin{proof} [Proof of Theorem \ref{critical}(1)]
Fix $\l \in
  [\left(\frac{\g}{b}\right)^{\frac{1}{j_c}}, 1)$ with
  $\g>\nep{\phantom{1}},$ and for notational conciseness, denote
$A_{\lambda}$ simply by $A$.  We first show that the asserted
inequality between the fixed points of the two maps implies
phase coexistence.  This is a simple consequence of the fact that,
for any boundary condition $\tau$,  the sequence $\{X_n^*\}$ defined by
\[ X_n^* \equiv X_n(j_c)=\mu^\tau_{T_n}(\s_r\ge j_c)/\mu^\tau_{T_n}(\s_r\le
  j_c), \quad n \in \bbN, \]
  obeys the recurrence
$$
  A^{-2}J_2^{(\lp)}(X_n)\le X_{n+2}^*\le A^2 J_2^{(\lm)}(X_n^*)\,
$$
where we have made use of (\ref{eq:R3}) and (\ref{eq:R3bis}), together with
the duality property $j_c^*+1=j_c$.
If now $\inte{\frac C2}$ boundary
conditions are imposed at the zeroth level then $X^*_0=0$
and $X_n^*$ will always be smaller than the smallest fixed point of $ x
\mapsto A^2J_2^{(\lm)}(x) $. On the other hand, under $\ceil{\frac C2}$
boundary conditions, $X_0^*=1$ and $X_n^*$ will always be larger than the
largest fixed point of $ x \mapsto A^{-2}J_2^{(\lp)}(x) $ because the
range of this mapping is contained in $[0,1]$ for large $b$.

We now prove our statement concerning the fixed points of (\ref{eq:A}),
(\ref{eq:B}).  First, consider the case
$\l=\left(\frac{\g}{b}\right)^{\frac {1}{j_c}}$
and observe that for any $z>0$\,,
\begin{equation}
  \label{eq:limit}
\lim_{b\to \infty}b\,A^{-2}J_2^{(\lp)}(z/b)=\lim_{b\to \infty}b\,A^{2}J_2^{(\lm)}(z/b)=H_\g(z),
\end{equation}
uniformly on bounded intervals. Next, we define
$$
\tilde{x}_-:=\frac{\log \g-\log(\log \g)}{b}\,\qquad \text{and}\qquad \tilde{x}_+:=\frac{\log \g}{b}\,.
$$
From  Lemma \ref{hmap}, it follows that $H_\g (b\tilde{x}_-) < b \tilde{x}_- <
 b \tilde{x}_+ < H_\g (b \tilde{x}_+)$.  Together with
(\ref{eq:limit}), this shows that for any $b$ large enough,
$$
  A^{2}J_2^{(\lm)}(\tilde{x}_-)< \tilde{x}_- < \tilde{x}_+<A^{-2}J_2^{(\lp)}(\tilde{x}_+)\,,
$$
and the first assertion of the lemma follows (for this case) because $A^{-2}J_2^{(\lp)}$ and
$A^{2}J_2^{(\lm)}$ are $S$-shaped exactly like $H_\g$.

We now consider the
case $\left(\frac{\g}{b}\right)^{\frac {1}{j_c}}\le \l<1$ and again we
compute
  \begin{equation}
    \label{eq:rhs}
 A^{2}J_2^{(\lm)}(\tilde{x}_-)\le  A^2\frac{\lm}{\left(1+\lm\nep{-b\tilde{x}_-}\right)^b}=
A^2\frac{\lm}{\left(1+\lm\frac{\log \g}{\g}\right)^b}\,.
  \end{equation}
If $\l$ does not tend to zero as $b\to \infty$, then it is obvious that
the r.h.s of (\ref{eq:rhs}) is smaller than $\tilde{x}_-$ for large enough
$b$. If instead $\lim_{b\to \infty}\l=0$ we proceed as follows. The function
$f_\g(\l)=\l/\left(1+\l\frac{\log \g}{\g}\right)^b$ satisfies
$$
f_\g'(\l)=\frac{1}{\left(1+\l\frac{\log \g}{\g}\right)^{2b}}\left(1-\frac{b\l\log \g}{\g+\l\log \g}\right)\,,
$$
and hence is decreasing in the interval
$(\frac{\g}{(b-1)\log \g},\infty)$. Since
$\g>\nep{}$ and our assumption $\lambda \rightarrow 0$ implies  $A = A_\lambda
\approx 1$ for large
$b$, we have the inequality
\[ \lm > A^{-1}\g/b> \g/((b-1)\log \g). \]
Thus,
we can conclude that the r.h.s of (\ref{eq:rhs}) is smaller than
the same expression with $\lm$ replaced by $A^{-1}\g/b$.  After this
replacement,
the resulting r.h.s of (\ref{eq:rhs}) is indeed smaller than
$\tilde{x}_-$ for all large enough $b$ because of (\ref{tx}) and (\ref{eq:limit}).
In conclusion, we have shown that for any
$\left(\frac{\g}{b}\right)^{\frac {1}{j_c}}\le \l<1$ the function
$A^{2}J_2^{(\lm)}$ has a fixed point smaller than $\tilde{x}_-$.

Next, we examine $A^{-2}J_2^{(\lp)}$. If $\lim_{b\to
  \infty}b\lp=\infty$ then it easily follows that for large $b$, we have
$A^{-2}J_2^{(\lp)}(A^{-2}\lp/2)>A^{-2}\lp/2 \gg \tilde{x}_-$. If instead
$\lp\le C/b$ for some finite constant $C$, we choose
$x_\l=\log(b\lp)/b >x_-$ and write
\begin{equation*}
  A^{-2}J_2^{(\lp)}(x_\l)\ge A^{-2}\lp\nep{-b\lp/(1+x_\l)^b}\,.
\end{equation*}
By construction, $\lim_{b\to
  \infty}\nep{-b\lp/(1+x_\l)^b}=\nep{-1}$. Therefore, for sufficiently large $b$,
$$
A^{-2}\lp\,\nep{-b\lp/(1+x_\l)^b}\ge (1-O(b^{-1}))\lp\,\nep{-1}\ge x_\l\,,
$$
because $\lp>\g/b$ with $\g>\nep{}$. In conclusion
$A^{-2}J_2^{(\lp)}(x)$ has a fixed point strictly bigger than $x_-$ and
the existence of a phase transition follows.
\end{proof}

\bigno
We now turn to the proof of the second assertion of Theorem \ref{critical}, namely the absence of a phase transition for $\l\le
\left(\frac{\g'}{b}\right)^{\frac{1}{j_c}}$, with $\g'<\nep{}$.
For this, we first establish two preliminary results in
Lemmas \ref{vardist} and \ref{vardist2}.   For any vertex
$y \in T_n$ and $i \in S_C$, we define a probability measure
 $\mu_y^{(i)}$ on the set of spins at $y$ as follows:
\begin{equation}
\label{muy}
 \mu_y^{(i)} (\sigma_y = j) \doteq \bbP(\sigma_y =j \tc \sigma_y \leq i^*), \quad j \in S_C,
\end{equation}
with $\bbP$, as always, depending on $\l$ and a boundary condition on $T_n$ (which for clarity we have suppressed in the notation).
Note that if $x$ is a site in $T_n$ that is neighbouring to $y$, then
$\mu_y^{(i)}$ represents the marginal on $y$ of the Gibbs
measure (with
some boundary condition on the leaves of $T_n$), conditioned to
have $i$ particles at $x$.
Recall that $\|\cdot\|_{\text TV}$ denotes
the total variation distance.
\begin{lemma}
\label{vardist}
For any $k<i$, we have
$$
\|\mu_y^{(i)}-\mu_y^{(k)}\|_{\text
  TV}=\frac{\mu_y^{(0)}\left(\s_y\in[i^*+1,k^*]\right)}{\mu_y^{(0)}\left(\s_y\le
  k^*\right)}\,.
$$
\end{lemma}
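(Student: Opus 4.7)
The plan is to reduce both $\mu_y^{(i)}$ and $\mu_y^{(k)}$ to the unconditional marginal $\mu_y^{(0)}$ (which, since $0^{*}=C$, is indeed $\bbP(\s_y=\cdot)$), and then to compute the total variation distance by isolating the set on which one measure dominates the other.

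First I would observe that, since $k<i$ implies $i^{*}<k^{*}$, the measure $\mu_y^{(i)}$ is supported on $\{0,\ldots,i^{*}\}$, while $\mu_y^{(k)}$ is supported on the larger set $\{0,\ldots,k^{*}\}$; and that for $j$ in the common support $\{0,\ldots,i^{*}\}$,
\begin{equation*}
\mu_y^{(i)}(\s_y=j)=\frac{\mu_y^{(0)}(\s_y=j)}{\mu_y^{(0)}(\s_y\le i^{*})},\qquad
\mu_y^{(k)}(\s_y=j)=\frac{\mu_y^{(0)}(\s_y=j)}{\mu_y^{(0)}(\s_y\le k^{*})},
\end{equation*}
while for $j\in[i^{*}+1,k^{*}]$ one has $\mu_y^{(i)}(\s_y=j)=0<\mu_y^{(k)}(\s_y=j)$. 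Since $\mu_y^{(0)}(\s_y\le i^{*})<\mu_y^{(0)}(\s_y\le k^{*})$, the set on which $\mu_y^{(i)}$ strictly dominates $\mu_y^{(k)}$ is exactly $\{0,\ldots,i^{*}\}$.

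Then I would apply the standard identity $\|\mu-\nu\|_{\text{TV}}=\sum_{j:\,\mu(j)>\nu(j)}\bigl(\mu(j)-\nu(j)\bigr)$ and compute
\begin{equation*}
\|\mu_y^{(i)}-\mu_y^{(k)}\|_{\text{TV}}
=\sum_{j=0}^{i^{*}}\mu_y^{(0)}(\s_y=j)\Biggl[\frac{1}{\mu_y^{(0)}(\s_y\le i^{*})}-\frac{1}{\mu_y^{(0)}(\s_y\le k^{*})}\Biggr]
=1-\frac{\mu_y^{(0)}(\s_y\le i^{*})}{\mu_y^{(0)}(\s_y\le k^{*})},
\end{equation*}
after pulling the bracket out of the sum and using $\sum_{j=0}^{i^{*}}\mu_y^{(0)}(\s_y=j)=\mu_y^{(0)}(\s_y\le i^{*})$. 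Rewriting the numerator $\mu_y^{(0)}(\s_y\le k^{*})-\mu_y^{(0)}(\s_y\le i^{*})$ as $\mu_y^{(0)}(\s_y\in[i^{*}+1,k^{*}])$ gives the asserted identity.

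There is no real obstacle here; the only thing to be careful about is the direction of the inequality ($k<i$ gives $i^{*}<k^{*}$, which in turn gives the correct dominating set), and the consistency check that one could equivalently sum $(\mu_y^{(k)}(j)-\mu_y^{(i)}(j))$ over $j\in[i^{*}+1,k^{*}]$, which yields the same answer $\mu_y^{(0)}(\s_y\in[i^{*}+1,k^{*}])/\mu_y^{(0)}(\s_y\le k^{*})$ and thus confirms the computation.
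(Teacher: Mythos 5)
Your proof is correct and takes essentially the same approach as the paper: both arguments reduce $\mu_y^{(i)}$ and $\mu_y^{(k)}$ to normalized restrictions of $\mu_y^{(0)}$ to the nested intervals $[0,i^*]\subset[0,k^*]$ and then evaluate the total variation distance by splitting over $[0,i^*]$ and $[i^*+1,k^*]$. The only difference is cosmetic --- the paper computes $\frac12\sum_j\bigl|\mu_y^{(i)}(\s_y=j)-\mu_y^{(k)}(\s_y=j)\bigr|$ as two halves, while you use the equivalent one-sided identity $\|\mu-\nu\|_{\text{TV}}=\sum_{j:\,\mu(j)>\nu(j)}\bigl(\mu(j)-\nu(j)\bigr)$, and your consistency check summing over $[i^*+1,k^*]$ is precisely the paper's second half.
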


\begin{proof}
By definition $\mu_y^{(i)}(\s_y=j)=\mu_y^{(0)}(\s_y=j\tc \s_y\le
i^*)$. Therefore, also recalling that $k<i$ implies $k^*>i^*$, we have
\begin{eqnarray*}
  \|\mu_y^{(i)}-\mu_y^{(k)}\|_{\text TV} & = & \frac 12 \sum_{j=0}^{i^*}\big \|\mu_y^{(i)}(\s_y=j)-\mu_y^{(k)}(\s_y=j)
  \big\| +\frac 12 \sum_{j=i^*+1}^{k^*}\mu_y^{(k)}(\s_y=j) \\
& = & \frac 12 \frac{\mu_y^{(0)}(\s_y\le k^*)-\mu_y^{(0)}(\s_y\le
  i^*)}{\mu_y^{(0)}(\s_y\le k^*)} +\frac 12 \frac{\mu_y^{(0)}(i^*+1\le
  \s_y\le k^*)}{\mu_y^{(0)}(\s_y\le k^*)}\\
& = & \frac{\mu_y^{(0)}\left(\s_y\in[i^*+1,k^*]\right)}{\mu_y^{(0)}\left(\s_y\le
  k^*\right)}.
\end{eqnarray*}
\end{proof}

Notice that if $x$ is an ancestor of $y$ then $\mu_y^{(0)}$ is nothing but the
Gibbs measure on the tree $\Tree_y^b$ rooted at $y$ with the
boundary conditions induced by those on $T_n$. If instead $y$ is
an ancestor of $x$ then $\mu_y^{(0)}$ becomes a Gibbs measure on the (non
regular) tree $T_n\setminus \Tree_x^b$. However, if $x,y$ are
sufficiently below the root of $T_n$, then $T_n\setminus
\Tree_x^b$ will coincide with a regular tree rooted at $y$ for a large
number of levels. That is all that we need to prove uniqueness below
$\left(\frac{\nep{}}{b}\right)^{\frac{1}{j_c}}$.

In what follows, given any non negative function $b \mapsto f(b)$ of the degree of
the tree $\Tree^b$, we will write $f(b)\approx 0$ if $\lim_{b\to \infty}bf(b)=0$.
\begin{lemma}
\label{vardist2}
Fix $\g'<\nep{}$ and assume $\l\le
\left(\frac{\g'}{b}\right)^{\frac{1}{j_c}}$. Then there exists $a<1$ and $n_0
\in \bbN$ such that for any
$n\ge n_0$ and any boundary condition $\t$ on the leaves of $T_n$,
\begin{equation*}
\limsup_{b\to \infty} b \mu^\t(\s_r\ge i^*+1)\le
  \begin{cases}
    0 & \text{if $i\le \inte{\frac
        C2}$}\\
  a & \text{ if $i=j_c = \ceil{\frac
        C2}$}
  \end{cases}
\end{equation*}
\end{lemma}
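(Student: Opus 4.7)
The plan is to apply the recursive upper bound \eqref{eq:R3}, namely $X_{n+2}(i) \le A_\l^2 \l^{i^*-i+1} J_2^{(\l_i)}(X_n(i))$, and convert it into a bound on the marginal via $\mu^\t(\s_r \ge i^*+1) = X_n(i)/(1+X_n(i)) \le X_n(i)$. Since $C = 2j_c-1$ is odd, the exponent $i^*-i+1 = C-2i+1$ vanishes at $i = j_c$, reducing the recursion there to $X_{n+2}(j_c) \le A_\l^2 J_2^{(\l_{j_c})}(X_n(j_c))$, while for $i \le j_c-1 = \inte{C/2}$ it is at least $2$.

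For $i \le j_c-1$, the trivial bound $J_2^{(\l_i)} \le \l_i = A_\l^{-1}\l^i$ combined with the recursion gives $X_{n+2}(i) \le A_\l \l^{i^*+1}$. Since $i^*+1 \ge j_c+1$ and $\l^{j_c} \le \g'/b$, this forces
$$
bX_{n+2}(i) \le A_\l\,\g'\,\l^{i^*-j_c+1} \to 0 \qquad (b \to \infty),
$$
because $\l \to 0$; this yields the first branch of the conclusion already for $n_0 = 2$.

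For $i = j_c$, set $g_b(x) := A_\l^2 J_2^{(\l_{j_c})}(x)$. One application gives $X_{n+2}(j_c) \le g_b(\infty) = A_\l \l^{j_c} \le A_\l\g'/b$, so from $n=2$ onward the rescaled iterates $bX_n(j_c)$ stay in $[0, A_\l\g']$. I would then analyse the dynamics via the scaling $\tilde g_b(z) := b g_b(z/b)$; by the computation leading to \eqref{eq:limit}, $\tilde g_b \to H_\g$ uniformly on bounded intervals, with $\g = \lim_{b\to\infty} b\l_{j_c} \in [0,\g']$ (along a subsequence, using $A_\l \to 1$). Lemma \ref{hmap}(1) then supplies a unique fixed point $z_0(\g) < 1$ of $H_\g$, and by continuity of $\g \mapsto z_0(\g)$ together with compactness of $[0,\g']$, the quantity $a := \sup_{\g \in [0,\g']} z_0(\g)$ is strictly less than $1$.

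The main technical obstacle is making the contraction near the fixed point uniform in $b$. For this I would use that $H_\g$ is concave on $(\log \g, \infty)$ and that its unique fixed point lies in this region, since $H_\g(\log \g) = \g/\nep{} > \log \g$ for $\g < \nep{}$; hence
$$
H_\g'(z_0(\g)) \le H_\g'(\log \g) = \g/\nep{} \le \g'/\nep{} < 1,
$$
uniformly in $\g \in [0,\g']$. Combined with the uniform convergence $\tilde g_b \to H_\g$ and the monotone (hence Dini-uniform) convergence $H_\g^{(k)} \to z_0(\g)$ on $[0, A_\l\g']$, one extracts a $b$-independent integer $k_0$ such that $k_0$ iterations of $g_b$ bring any starting value in $[0, A_\l\g'/b]$ to within $o(1)/b$ of the unique fixed point $x_0^{(b)}$ of $g_b$, which satisfies $b x_0^{(b)} \to z_0(\g)$. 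Setting $n_0 := 2(k_0+1)$ and applying the argument to both even and odd subsequences of $\{X_n(j_c)\}$ yields $\limsup_{b\to\infty} b\mu^\t(\s_r \ge j_c) \le a$. The strict inequality $\g' < \nep{}$ is essential here: at $\g' = \nep{}$ the tangency $H_\g'(z_0) = 1$ would destroy the uniform contraction, mirroring the critical threshold in Theorem \ref{critical}.
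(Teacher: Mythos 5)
Your proposal is correct, and its skeleton coincides with the paper's: the case $i\le\inte{C/2}$ is handled exactly as there (the crude bound $J_2^{(\l_i)}\le \l_i$ in \eqref{eq:R3} gives $bX_n(i)\le A_\l\, b\,\l^{i^*+1}\to 0$ since $i^*+1\ge j_c+1$), and for $i=j_c$ both arguments rest on the scaling limit \eqref{eq:limit} and Lemma \ref{hmap}. Where you genuinely diverge is in how the fixed-point information is transferred to the iterates. The paper sets $a_\infty=\limsup_b b\hat{x}_+(b)$, where $\hat{x}_+(b)$ is the largest fixed point of the one-step upper map, proves $a_\infty<1$ by contradiction (using that $J_2^{(\l)}(x)$ is increasing in $\l$ for $x$ near $1$, so that $\l$ may be taken extremal), and then simply asserts that ``because of \eqref{eq:R3} it is enough to prove $a_\infty<1$,'' leaving implicit how finitely many iterations, with $n_0$ \emph{independent of $b$}, bring $bX_n(j_c)$ below a constant $a<1$. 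You instead run a direct quantitative iteration: the global bound $\sup_z H_\g'(z)=H_\g'(\log\g)=\g/\nep{}\le \g'/\nep{}<1$ (your concavity argument is just this computation, valid along the whole orbit, not only near the fixed point) gives a $b$-uniform contraction rate; uniform convergence $\tilde g_b\to H_\g$ along subsequences with $b\lm\to\g\in[0,\g']$ then yields a $b$-independent $k_0$; and continuity of $\g\mapsto z_0(\g)$ on the compact $[0,\g']$ produces the uniform $a<1$ (one can also take $a=\g'\nep{-\g'/\nep{}}$ explicitly, in the spirit of the paper's remark that $\g\nep{-\g/\nep{}}<1$ for $\g<\nep{}$). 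This buys exactly the quantifier structure the lemma asserts --- one $n_0$ valid for all large $b$ and all boundary conditions --- and avoids the contradiction step and the monotonicity-in-$\l$ device altogether. One inessential overreach: you invoke ``the unique fixed point $x_0^{(b)}$ of $g_b$,'' but uniform convergence of $\tilde g_b$ to $H_\g$ does not by itself give uniqueness of the fixed point of the $S$-shaped $\tilde g_b$; it only shows that all its fixed points (which lie in $[0,A_\l\g']$, the range of $\tilde g_b$) converge to $z_0(\g)$. Your argument never needs uniqueness, though, since bounding the monotone iterates $\tilde g_b^{(k_0)}(A_\l\g')$ from above suffices.
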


\begin{proof} It suffices to bound $X_n(i)$  from above for $i\le \inte{\frac
    C2}$ or $i=\ceil{\frac C2}$. In the first case, when $i\le \inte{\frac
        C2}$,  the stated bound follows easily since
(\ref{eq:R3}) and the assumed bound on $\lambda$ imply that
    for some finite constant $K$,
$$
    bX_n(i)\le \l^{i^*+1}b \leq K b^{(1 - \frac{i^*+1}{j_c})} \approx 0\,.
$$
In the second case, when $i=j_c$, set $a_\infty:=\limsup_{b\to \infty}b\hat{x}_+(b)$,
where $\hat{x}_+(b)$ is the largest fixed point of the $S$-shaped function
$x\mapsto A_\l J_2^{(\lm)}(x)$.
Due to the assumption $\l\le
\left(\frac{\g'}{b}\right)^{\frac{1}{j_c}}$, it follows that $a_\infty\le
\g'$. Because of (\ref{eq:R3}) it is enough to prove that
$a_\infty<1$. Assume the contrary. Then the fixed point equation, together
with $\l\le
\left(\frac{\g'}{b}\right)^{\frac{1}{j_c}}$,
readily implies that
$$
  a_\infty\le \g\nep{-\g\nep{-a_\infty}},
$$
which in turn implies that $a_\infty$ must be smaller than the unique
fixed point $z_0$ of the map $H$.
Since $\g\nep{-\g/\nep{}}<1$ if $\g<\nep{}$ necessarily
$z_0 <1$ and we get a contradiction.  Note that in the above proof by contradiction,
the hypothesis
  $a_\infty \ge 1$ enters as follows. If $x>1-\d$, $0<\d\ll 1$ then
  $J_2^{(\l)}(x)$ is increasing in $\l$ and so we may safely
  assume $\l= \left(\frac{\g'}{b}\right)^{\frac{1}{j_c}}$ and not just
smaller or equal.
\end{proof}

We are now ready to prove uniqueness for $\l\le
\left(\frac{\g'}{b}\right)^{\frac {1}{j_c}}$.

\begin{proof} [Proof of Theorem \ref{critical}(2)]
For simplicity we begin
with $\l=\left(\frac{\g'}{b}\right)^{\frac {1}{j_c}}$. In this case, it
follows immediately from the basic inequality (\ref{eq:R3}) that for any initial
condition, any $n\ge 2$ and any $b$ large enough, there exist constants
$c_1,\  c_2$
such that
\begin{equation}
  X_n^{\ceil{C/2}}\le c_1\nep{-c_2\,b^{\a}}\,,
\label{3.16}
\end{equation}
where $\a=1/(j_c+1)$. In another
words, recalling the probability measure $\mu_y^{(i)}$ introduced in
(\ref{muy}) and
using the obvious fact that for any $i \le C$\,,
\begin{equation*}
\mu_y^{(i)}\left([j_c+1,C]\right) \le X_n^{\ceil{C/2}}\,,
\end{equation*}
we get that the probability of having more than $j_c$ particles at
$y$ given $i$ particles at $x$ is exponentially small in $b$.

Now, recall that $T_\ell$ is the finite-tree of depth $\ell$ rooted
at $r$, and let $\t,\t'$ be two boundary conditions on the leaves of
$T_\ell$ that differ at only one vertex $v_0$. Let also $\G = \{v_0,
  v_1,\dots ,v_\ell\}$ be the unique path joining $v_0$ to the root
$r = v_\ell$. We recursively couple the corresponding measures
$\mu^\t \doteq \mu^\t_{T_{\ell}, \l}$ and $\mu^{\t'} \doteq \mu^{\t'}_{T_\ell, \l}$
by repeatedly applying the following step. Assume
that, for any pair $(\s_{v_1}, \s_{v_2})$ with $\s_{v_1}\neq \s'_{v_1}$ we can couple
$\mu^\t(\cdot\tc \s_{v_1})$ and $\mu^{\t'}(\cdot\tc \s'_{v_1})$ and call
$\nu_{\ell-1}^{\s_{v_1},\s'_{v_1}}$ the coupled measure. It is
understood that $\nu_{\ell-1}^{\s_{v_1},\s'_{v_1}}$ is concentrated
along the diagonal if $\s_{v_1}=\s'_{v_1}$.  Let
$\pi_{1}^{\t_{v_0},\t'_{v_0}}$ be the coupling of
the marginals on  of the two Gibbs measures on $v_1$ that realizes the
variation distance (\ie $\pi_{1}^{\t_{v_0},\t'_{v_0}}(\s_{v_1}\neq
\s'_{v_1})=\|\mu_{v_1}^\t -\mu_{v_1}^{\t'}\|_{TV}$).
Then we set
\begin{equation*}
  \nu_{\ell}^{\s_{v_1},\s'_{v_1}}(\s,\s') =
  \pi_{1}^{\t_{v_0},\t'_{v_0}}(\s_{v_1},\s'_{v_1})
\nu_{\ell-1}^{\s_{v_1},\s'_{v_1}}(\s_{\Tree^b_\ell\setminus v_1},\s'_{\Tree^b_\ell\setminus v_1})\,.
\end{equation*}
If we
iterate the above formula we finally get a coupling $\nu^{\t,\t'}$
such that the probability of seeing a discrepancy at the
root can be expressed as
\begin{equation}
  \sum_{\s_{v_1}\neq \s'_{v_1}\atop{\eta_{v_2}\neq \eta'_{v_2}\atop \dots }}
\pi_{1}^{\t_{v_0},\t'_{v_0}}(\s_{v_1},\s'_{v_1})\,
\pi_{2}^{\s_{v_1},\s'_{v_1}}(\eta_{v_2},\h'_{v_2})\pi_3^{\eta_{v_2},\eta'_{v_2}}\dots
\label{Markov}
\end{equation}
with self explanatory notation. If we can show that the above expression tends
to zero as $\ell\to \infty$ faster than $b^{-\ell}$  uniformly in $\t,\t'$, then
uniqueness will follow by a standard path coupling (or triangle
inequality) argument (see, for example, \cite{BubleyDyer}).

On the state space $S := [0,\dots ,C]^2$ consider a
non-homogeneous Markov chain $\{\xi_t\}_{t=0}^\ell$ with transition matrix at time $t$
given by $P_t(\xi,\xi')=\pi_t^\xi(\xi')$ and initial condition
$\xi_0=(\t_{v_0},\t'_{v_0})$.
Let also $B = \{(i, j)\in S\  : i \ge j_c+1\}\cup \{(i, j)\in S^2\ : j
\ge j_c+1\}$ be
the bad set and let $D = \{(i, i)\in S \ : i\in [0,\dots ,C]\}$ be the diagonal.
Equation (\ref{Markov})  is then nothing but the probability that the chain
does not hit $D$ within time $\ell$.

For $b$ large enough (depending only on $\g'<\nep{}$) the two key
properties of the chain, which immediately follow from Lemmas \ref{vardist} and
\ref{vardist2} and the inequality (\ref{3.16}), are the following:
\begin{align}
  \sup_t\sup_{\xi\in B^c}P_t(\xi,D^c) &\le \frac{a}{b},\quad a<1
  \label{Markov1}\\
  \sup_t\sup_{\xi}P_t(\xi,B) &\le c_1\nep{-c_2\,b^\a},\quad \a>0\,.
\label{Markov2}
\end{align}
Notice that it is not difficult to show that
$$
\sup_t\sup_{\xi\in B}P_t(\xi,D^c)\approx \l\gg 1/b\,.
$$
In other words, the probability of not entering the
diagonal $D$ in one step is suitably small (\ie smaller than $a/b$, $a < 1$)
only if we start from the good set $B^c$.
Using (\ref{Markov1}) and (\ref{Markov2}), we can immediately conclude that
\begin{eqnarray}
  \bbP(\xi_t\notin D\ \text{for all } 0 \le   t\le \ell) & \le
&
\sum_{k=0}^\ell{\ell\choose k}\left(c_1\nep{-c_2\,b^\a}\right)^k\left(\frac
ab\right)^{\ell-2k-1}\nonumber\\
& \leq & \frac ba \left(\frac ba c_1\nep{-c_2\,b^\a}+\frac ab \right)^\ell\,.
\label{Markov3}
\end{eqnarray}
The ``$-1$'' in the
exponent of $a/b$ above takes into account the fact that we may start at $x_0$ in
the bad set $B$, while the extra ``$-k$'' in the exponent accounts for the fact that for any
transition from $B$ to $B^c$ we do not necessarily have a good coupling
bound. It is clear that the right hand side of (\ref{Markov3})  tends to zero faster than
$b^{-\ell}$ as $\ell\to \infty$ because $a < 1$.
\end{proof}

\subsection{The case of $C$ even} \label{sec-ceven}
Throughout this discussion, we assume $C$ even and we set $j_c=\frac C2
+1$. Notice that $j_c=(\frac C2)^*+1$.
\begin{theorem}
\label{EVEN}
Assume $\l=\left(\g \frac{\log b}{b}\right)^{\frac
    {1}{j_c}}$ with $\g> 1/(C+2)$. Then, for any large enough $b$
  there is phase coexistence. If instead $\g< \frac {1}{C+2}$, for any large enough $b$
  there is a unique Gibbs measure.
  \end{theorem}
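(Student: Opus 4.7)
The plan is to follow the template of Theorem \ref{critical}. Specialize the bounds (\ref{eq:R3}) and (\ref{eq:R3bis}) to $j=j_c=C/2+1$. For even $C$ one has $j_c^{*}-j_c+1=-1$, so these bounds take the form
\begin{equation*}
F_+^{(j_c)}(x)=A_\lambda^{2}\lambda^{-1}\,J_2^{(A_\lambda^{-1}\lambda^{j_c})}(x),\qquad F_-^{(j_c)}(x)=A_\lambda^{-2}\lambda^{-1}\,J_2^{(A_\lambda\lambda^{j_c})}(x),
\end{equation*}
each a strictly positive multiple of $J_2$, hence $S$-shaped. Under the hypothesis $\lambda^{j_c}=\gamma\log b/b$ the driving parameter $\mu:=A_\lambda^{\mp 1}\lambda^{j_c}\asymp\gamma\log b/b$ sits far above the $C=1$ critical activity $e/b$, so $J_2^{(\mu)}$ itself has three fixed points for large $b$. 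The question is whether the large multiplicative prefactor $A_\lambda^{\pm2}\lambda^{-1}$ preserves or destroys the multi-peak structure when turned into $F_\pm^{(j_c)}$.

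For the coexistence direction ($\gamma>1/(C+2)$) I would imitate the proof of Theorem \ref{critical}(1): exhibit explicit levels $\tilde x_-<\tilde x_+$ such that $F_+^{(j_c)}(\tilde x_-)<\tilde x_-$ and $F_-^{(j_c)}(\tilde x_+)>\tilde x_+$ for all large enough $b$, so that by $S$-shapedness the smallest fixed point of $F_+^{(j_c)}$ lies strictly below the largest fixed point of $F_-^{(j_c)}$. The argument imposing $(j_c-2)$-boundary versus $j_c$-boundary on the leaves then produces two different limits for $X_n(j_c)=R_n(j_c)-1$, giving coexistence. Approximating $(1+x)^b\approx e^{bx}$ and $(1+\mu)^b\approx b^{\gamma}$ one sees that $F_\pm^{(j_c)}$ interpolates between a lower plateau of order $(\log b/b)^{(j_c-1)/j_c}/b^\gamma$ at $x=0$ and an upper plateau of order $(\log b/b)^{(j_c-1)/j_c}$ at $x=\infty$, with a steep transition near $x\asymp (\log\log b)/b$. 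A tangency computation balancing $F_\pm^{(j_c)}(x^*)=x^*$ against $(F_\pm^{(j_c)})'(x^*)=1$ in that transition window is what must single out the precise critical constant $1/(C+2)$.

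For the uniqueness direction ($\gamma<1/(C+2)$) I would first show that $F_+^{(j_c)}$ admits a unique fixed point $\hat x(b)$ with $b\hat x(b)\to 0$ (possibly up to logarithmic corrections)---this is the even-$C$ analogue of the ``$a_\infty<1$'' step in the proof of Lemma \ref{vardist2}, and should go through by a fixed-point continuity argument since under $\gamma<1/(C+2)$ the line $y=x$ cuts below the upper plateau of $F_+^{(j_c)}$. Combined with the bound $X_n(j)\le \mathrm{const}\cdot \lambda^{j^*+1}$ for $j\le C/2$ (immediate from (\ref{eq:R3}), exactly as in the odd case), this supplies the two key ingredients (\ref{Markov1}) and (\ref{Markov2}) for the non-homogeneous Markov chain coupling of Theorem \ref{critical}(2), now with bad set $B=\{(i,j)\in S^2:\ i\ge j_c\ \text{or}\ j\ge j_c\}$. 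The decay (\ref{Markov3}) then yields uniqueness by standard path coupling.

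The main obstacle is the coexistence half. Unlike the odd case, where the single rescaling $x=z/b$ produces the clean limit $H_\gamma(z)=\gamma\nep{-\gamma\nep{-z}}$, the even case has no one-scale reduction: the $\log b$ weight in $\mu$ makes the lower and upper plateaus of $F_\pm^{(j_c)}$ polynomially separated (by a factor of $b^{\gamma}$), and the ``transition'' occurs over a much thinner $(\log\log b)/b$ window. Capturing the exact threshold $1/(C+2)$---rather than a naive guess of order $1/j_c=2/(C+2)$---requires bookkeeping the double-exponential structure of $J_2^{(\mu)}$ together with the subleading $A_\lambda$-corrections through both the upper and lower bounds simultaneously, and this is where the substantive technical work lies.
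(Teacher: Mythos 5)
Your skeleton does coincide with the paper's: the paper runs exactly this scheme, only on the mirror component $j=C/2$ (tracking $\bbP(\s_r\ge j_c)$ via $F^{(C/2)}_\pm$) rather than your $j=j_c$ (tracking $\bbP(\s_r\ge C/2)$), and its uniqueness half is the same path coupling with bad set $\{\s\ge j_c\}$. But as a proof your proposal is incomplete at precisely the decisive point, and the gap is not repairable along the route you sketch. You defer the constant $1/(C+2)$ to a ``tangency computation,'' yet your own (correct) plateau bookkeeping already pins the bifurcation of your maps: the lower plateau of $F^{(j_c)}_\pm$ sits at $x\asymp\l^{C/2}b^{-\g}$, and it is a self-consistent small fixed point precisely when $b\,\l^{C/2}b^{-\g}\to 0$; since $\l^{j_c}=\g\log b/b$ gives $b\l^{C/2}=b^{2/(C+2)}(\g\log b)^{C/(C+2)}$, the dichotomy for the sandwich \eqref{eq:R3}--\eqref{eq:R3bis} (on either component, since $F_+$ and $F_-$ differ only by $A_\l^{\pm}$ factors with $A_\l\to1$) occurs at $\g=2/(C+2)$ up to logarithms. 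That is a discrepancy in a \emph{power of $b$}, and no tangency refinement inside a $(\log\log b)/b$ transition window can move a threshold by a power of $b$. So the tension you flag in your last paragraph is genuine and fatal to the plan as stated: carried out correctly, your route (which is the paper's route) establishes the transition at $\g=2/(C+2)$, not at $1/(C+2)$. Note that the paper's own proof obtains $1/(C+2)$ only through the evaluation $F^{(C/2)}_-(\a\log b/b)\approx\frac{\g\log b}{b}\,\nep{-b^{\frac{1}{C+2}-\a}}$ (and similarly in \eqref{fix+} and in the exponent $a=\frac{1}{C+2}-\g$ of the uniqueness half), which presupposes $b\l^{C/2}=b^{1/(C+2)+o(1)}$; checking the arithmetic as above gives $b\l^{C/2}=b^{2/(C+2)+o(1)}$, so your ``naive guess'' $2/(C+2)$ is in fact what the method yields on both sides, and a proof of the statement with the constant $1/(C+2)$ cannot be obtained by bookkeeping $A_\l$-corrections more carefully.

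Independently of the threshold constant, your uniqueness half has two concrete errors. First, the claim that for small $\g$ the map $F^{(j_c)}_+$ has a unique fixed point $\hat x(b)$ with $b\hat x(b)\to 0$ is false: when the small fixed point disappears, the surviving fixed point of the $j_c$-map is the \emph{upper} one, $\hat x\asymp\l^{C/2}=b^{-C/(C+2)+o(1)}$, so $b\hat x\to\infty$. This reflects the physics: in the unique phase the spins equal to $C/2$ are not rare ($\bbP(\s_r\ge C/2)\asymp\l^{C/2}$); what is exponentially small is $\bbP(\s_r\ge j_c)$, which lives on the $j=C/2$ component, so the coupling inputs must be extracted there, as the paper does. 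Second, your cited bound $X_n(j)\le\mathrm{const}\cdot\l^{j^*+1}$ gives at $j=C/2$ only $X_n(C/2)=O(\log b/b)$, which does not supply the estimate \eqref{Markov2} of order $c_1\nep{-c_2 b^\a}$; one needs the additional bootstrap iteration (feed $x\le\mathrm{const}\cdot\log b/b$ back into $F^{(C/2)}_+$, using that $\g$ is below threshold, to get $X_n^{(C/2)}\le\nep{-b^{a}}$), the even-$C$ analogue of \eqref{3.16}. Without these two repairs, the Markov-chain coupling you invoke from the proof of Theorem \ref{critical}(2) has no hypotheses to run on.
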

  \begin{proof}
Fix $\g> \frac{1}{C+2}$ and assume $\l= \left(\g \frac{\log b}{b}\right)^{\frac
    {1}{j_c}}$. We will show that the largest fixed point of $F^{(\frac C2)}_-$ is
  strictly larger than the smallest fixed point of $F^{(\frac C2)}_+$. By the usual
  argument that is enough to prove phase coexistence.

  Pick $\a$ halfway between $1/(C+2)$ and $\g$ and compute
the value $F^{(\frac C2)}_-(\frac{\a\log b}{b})$ for
  large $b$. From the
  definition we get
  \begin{equation*}
    F^{(\frac C2)}_-(\frac{\a\log b}{b})\approx
    \frac{\g\log b}{b}\,\nep{-b^{\frac{1}{C+2}-\a}}\approx
    \frac{\g\log b}{b}\gg \frac{\a\log b}{b}\,.
  \end{equation*}
Therefore there exists a fixed point of $F^{(\frac C2)}_-$ greater than
$\frac{\a\log b}{b}$. On the other hand
\begin{equation}
  F^{(\frac
  C2)}_+\left(2\g \frac{\log b}{b}\nep{-b^{1/(C+2)}}\right)\approx
\g \frac{\log b}{b}\nep{-b\l^{\frac C2}}\ll 2\g \frac{\log b}{b}\nep{-b^{1/(C+2)}}\,,
\label{fix+}
\end{equation}
so that $F^{(\frac C2)}_+$ has a fixed point smaller than
$2\g \frac{\log b}{b}\nep{-b^{1/(C+2)}}$\,, and now the first statement of the
theorem follows.

Assume now $\g< \frac {1}{C+2}$. In that case, using (\ref{eq:R3}), we
infer that, for any boundary condition and any large enough $b$,
\begin{equation*}
\mu^\t_{T_n}\left(\s_r\ge \textstyle{\frac C2 +1}\right)\le  X_n^{(C/2)}\le \nep{-b^{a}}\,,\quad a=\frac{1}{C+2}-\g
\end{equation*}
The proof of uniqueness follows now exactly the same lines of the odd
case with the difference that now the bad set is $B=\{C/2+1,\dots,C\}$
and (\ref{Markov1}), (\ref{Markov2}) are
changed into
\begin{eqnarray}
  \sup_t\sup_{\xi\in B^c}P_t(\xi,D^c) &\le& c_1\nep{-c_2\,b^\a},\quad \a>0
  \label{Markov1bis}\\
  \sup_t\sup_{\xi}P_t(\xi,B) &\le& c_1\nep{-c_2\,b^\a},\quad \a>0\,.
\label{Markov2bis}
\end{eqnarray}
\end{proof}

\subsection{First-order phase transitions for $C$ even and large $b$} \label{EVEN2}
We now turn to showing that for all even $C$ and large enough $b$ (depending on
$C$), the phase transition established in Theorem \ref{EVEN} is first-order.
At the end of Section \ref{sec-ceven} we showed that as $\l$ varies, for example, in the
    interval
$$
\left[\left(\frac{\log b}{b}\right)^{\frac
    {2}{C+2}}, \ \left( \frac{3\log b}{b}\right)^{\frac
    {2}{C+2}}\right]\,,
  $$
  the values of
  \begin{equation*}
m(\l):=\limsup_{n\to \infty}\left[\mu_{T_n}^{C}(\s_r > C/2)-\mu_{T_n}^{0}(\s_r > C/2)\right]
  \end{equation*}
  vary between
  $0$ and $\O(2\frac{\log b}{b})$\,. (Recall that the superscripts $C$ and $0$ indicate full b.c.\ and
empty b.c., respectively.)
Notice that, by monotonicity, the $\limsup_{n}$ above is attained over the
sequence of even $n$'s and that $\mu_{T_{2n}}^{C}(\s_r > C/2)$
is decreasing in $n$.

Here, we argue that in the above interval $m(\l)$ cannot be
  continuous. The starting point is the observation that, because of (\ref{fix+}), for
all
$$
\l\in \left[\left(\frac{\log b}{b}\right)^{\frac {2}{C+2}},\left(
\frac{3\log b}{b}\right)^{\frac {2}{C+2}}\right]
$$
the smallest fixed
point of $F^{(\frac C2)}_+$ is exponentially small in $b^\a$ for
some $\a>0$. Thus, in particular, there exist constants $c_1\,, c_2$ such that
\begin{equation*}
  \mu_{T_n}^{0}(\s_r > C/2)\le c_1\nep{-c_2\,b^\a}\,,\quad
  \forall n\ge 1\,.
\end{equation*}
Fix now $\d<1$ and assume that for some $n_0$,
\begin{equation*}
  \mu_{T_{2n_0}^b}^{C}(\s_r > C/2)\le \frac{\d}{b}\,.
\end{equation*}
By monotonicity that implies
\begin{equation*}
  \sup_{n\ge 2n_0}\sup_\t
  \mu_{T_n}^{C}(\s_r > C/2)\le \frac{\d}{b}\,.
\end{equation*}
Thus we can proceed with the previously described coupling argument with
(\ref{Markov1bis}) and (\ref{Markov2bis}) replaced by
\begin{eqnarray}
  \sup_{t\ge 2n_0}\sup_{\xi\in B^c}P_t(\xi,D^c) &\le& \frac{\d}{b},
  \label{Markov5bis}\\
  \sup_{t\ge 2n_0}\sup_{\xi}P_t(\xi,B) &\le& c_1\nep{-c_2\,b^\a}\,.
\label{Markov6bis}
\end{eqnarray}
and we may conclude that $m(\l)=0$.

In other words we have shown that $m(\l)>0$
implies that for all $n$,
$$
\mu_{T_{2n}}^{C}(\s_r > C/2)> \frac{\d}{b},
$$
so that
$$
m(\l)\ge  \frac{\d}{b} - c_1\nep{-c_2\,b^\a}\,.
$$
It follows now that the phase transition is first-order.

\medskip

\noindent
{\bf Acknowledgments}. The authors thank Microsoft Research, particularly the Theory Group, for its hospitality, and for facilitating this collaboration. The authors
are also thankful to Christian Borgs, Roman Koteck\'y and Ilze Ziedins for useful discussions
in the early stages of this research.


\begin{thebibliography}{10}


\bibitem{ballfrancrow93}
T.~Ballardie, P.~Francis and J.~Crowcroft.
\newblock Core-based trees (CBT): An architecture for scalable
inter-domain multicast routing, {\em SIGCOMM 93}, 1993.

\bibitem{bax}
R.~J.~Baxter.
\newblock{{\em Exactly Solved Models in Statistical Mechanics}},
\newblock{Academic Press}, 1982.


\bibitem{vanste}
J.~van den Berg and J.~E.~Steif.
\newblock{Percolation and the hard-core lattice gas model},
\newblock{\em Stoch.\ Proc.\ Appl.} 49:179-197, 1995.


\bibitem{briwin99}
G.~R.~Brightwell and P.~Winkler.
\newblock{Graph homomorphisms and phase transitions},
\newblock{\em J.\ Comb.\ Theory (Series B)} 77:415-435, 1999.


\bibitem{briwin02}
G.~R.~Brightwell and P.~Winkler.
\newblock{Hard constraints and the Bethe lattice:
adventures at the interface of combinatorics and statistical physics},
\newblock {\em Proc. ICM 2002}, Higher Education Press, Beijing,
 IIIi:605 -- 624, 2002.

\bibitem{BubleyDyer}
R. Bubley and M. Dyer,
Path coupling: A technique for proving rapid mixing in Markov chains, {\em
Proc. IEEE FOCS} (1997), 223--231.


\bibitem{galkah}
D.~Galvin and J.~Kahn.
\newblock{On phase transition in the hard-core model on $\bbZ^d$},
\newblock{\em Comb. Prob. Comp.}  {\bf 13} (2004), 137--164.


\bibitem{Georgiibook}
H.-O. Georgii.
\newblock{{\em Gibbs Measures and Phase Transitions}},
\newblock{de Gruyter}, Berlin, 1988.

\bibitem{kel}
F.~Kelly.
\newblock{Loss networks},
\newblock{\em Ann. Appl. Probab.} {\bf 1} no. 3 (1991), 319--378.

\bibitem{kleibook2}
L.~Kleinrock.
\newblock{\em Queueing Systems, Volume 2: Computer Applications},
\newblock{Wiley, New York}, 1976.


\bibitem{lou}
G.~Louth.
\newblock{{\em Stochastic networks:  complexity, dependence and
routing}},
\newblock{Cambridge University} (thesis), 1990.


\bibitem{lueramzie06}
B.~Luen, K.~Ramanan  and I.~Ziedins.
\newblock Nonmonotonicity of phase transitions in a loss network with
controls,
\newblock {\em Annals of Applied Probability}, {\bf 16}, 3:1528--1562,
2006.

\bibitem{marrozsuh04}
J.~Martin, U.~Rozikov and Y.~Suhov.
\newblock{A three state hard-core model on a Cayley tree},
\newblock{\em Journal of Nonlinear Mathematical Physics}, {\bf 12},
3:432--448, 2005.

\bibitem{mazsuh}
A.~Mazel and Y.~Suhov.
\newblock{Random surfaces with two-sided constraints:
an application of the theory of dominant ground states},
\newblock{\em J. Stat. Phys.} 64:111-134, 1991.


\bibitem{ramzie02}
K.~Ramanan, A.~Sengupta, I.~Ziedins and P.~Mitra.
\newblock Markov random field models of multicasting in tree networks,
\newblock {\em Advances in Applied Probability}, {\bf 34}, 1:1-27,
2002.




\bibitem{spi}
F.~Spitzer.
\newblock{Markov random fields on an infinite tree},
\newblock{\em Ann.\ Prob.\ } 3:387-398, 1975.


\bibitem{yanwan98}
Y.~Yang, and J.~Wang,
\newblock{On blocking probability of multicast networks},
{\em  IEEE Transactions on Communications},
 {\bf 46}, 7:957--968, 1998.

\bibitem{Zachary83}
S.~Zachary,
\newblock{Countable state space Markov random fields and Markov chains
on trees},
\newblock{\em Ann.\ Prob.} 11:894-903, 1983.

\bibitem{Zachary85}
S.~Zachary,
\newblock{Bounded, attractive and repulsive Markov specifications on
trees
and on the one-dimensional lattice},
\newblock{\em Stoch.\ Proc.\ Appl.} 20:247-256, 1985.
\end{thebibliography}
\end{document}